\theoremstyle{plain}
\newtheorem*{thmA}{Theorem A}
\newtheorem*{thmB}{Theorem B}
\newtheorem{thm}{Theorem}[section]
\newtheorem{lem}[thm]{Lemma}
\newtheorem{pro}[thm]{Proposition}
\newtheorem{cor}[thm]{Corollary}
\theoremstyle{definition}
\newcommand{\TT}{\mathcal{T}}
\newcommand{\norm}[1]{{\vert #1 \vert}}
\DeclareMathOperator{\Aut}{Aut}
\DeclareMathOperator{\Stab}{Stab}
\begin{document}

\title{On self-similar finite $p$-groups}

\author[A.\ Babai]{Azam Babai}
\address{Department of Mathematics\\ University of Qom\\ Qom\\ Iran}
\email{a\_babai@aut.ac.ir}

\author[Kh.\ Fathalikhani]{Khadijeh Fathalikhani}
\address{Department of Pure Mathematics\\ Faculty of Mathematical Sciences\\
University of Kashan\\ Kashan 87317-51167\\ Iran}
\email{fathalikhani.kh@ut.ac.ir}

\author[G.A.\ Fern\'andez-Alcober]{Gustavo A.\ Fern\'andez-Alcober}
\address{Department of Mathematics\\ University of the Basque Country UPV/EHU\\
48080 Bilbao, Spain}
\email{gustavo.fernandez@ehu.eus}

\author[M.\ Vannacci]{Matteo Vannacci}
\address{Mathematisches Institut der Heinrich-Heine-Universit\"at, Universit\"atsstr.~1, 40225 D\"usseldorf, Germany}
\email{vannacci@uni-duesseldorf.de}

\thanks{The second author is supported by the University of Kashan, under grant no.\  364988/10.
The third author is supported by the Spanish Government, grants
MTM2011-28229-C02-02 and MTM2014-53810-C2-2-P,
and by the Basque Government, grants IT753-13 and IT974-16.}

\keywords{finite $p$-groups, $p$-adic tree, self-similarity,
virtual endomorphisms\vspace{3pt}}

\subjclass[2010]{Primary 20E08}

\begin{abstract}
In this paper, we address the following question: when is a finite $p$-group $G$ self-similar, i.e.\ when can $G$ be faithfully represented as a self-similar group of automorphisms of the $p$-adic tree?
We show that, if $G$ is a self-similar finite $p$-group of rank $r$, then its order is bounded by a function of $p$ and $r$.
This applies in particular to finite $p$-groups of a given coclass.
In the particular case of groups of maximal class, that is, of coclass $1$, we can fully answer the question above: a $p$-group of maximal class $G$ is self-similar if and only if it contains an elementary abelian maximal subgroup over which $G$ splits.
Furthermore, in that case the order of $G$ is at most $p^{\hspace{0.5pt}p+1}$, and this bound is sharp.
\end{abstract}

\maketitle

\section{Introduction}

A lot of interest has been raised in the past decades around groups of automorphisms of rooted trees, since they provide many examples and counterexamples for significant problems in group theory.
Prominent among these examples are the groups constructed by Grigorchuk \cite{gri}, and by Gupta and Sidki \cite{gup-sid}, which act on the $p$-adic tree, i.e.\ a regular rooted tree with $p$ children at each vertex, where $p$ is a prime number.
One of the most important properties of the Grigorchuk and Gupta-Sidki groups is the fact that they are \emph{self-similar\/}.
In other words, for every automorphism $g$ in such a group $G$, and for every vertex $u$ of the tree, the section $g_u$ also belongs to $G$.
(See the beginning of Section 2 for the definition of sections and other relevant concepts from the theory of groups of automorphisms of rooted trees.)

Given an abstract group $G$ and a prime $p$, the question arises as to whether or not $G$ can be faithfully represented as a self-similar group of automorphisms of the $p$-adic tree.
If the answer is positive, then we say that $G$ is self-similar for the prime $p$.
Nekrashevych and Sidki \cite{nek-sid} showed that free abelian groups of finite rank are self-similar for the prime $2$, a result which can similarly be established for all primes
\cite[Section 2.9.2]{nek2}.
On the contrary, if a finitely generated nilpotent group $G$ is self-similar for the prime
$p$ and the corresponding action is transitive on the first level of the tree, then $G$ is either free abelian or a finite $p$-group, as proved by Berlatto and Sidki in
\cite[Corollary 2]{ber-sid}.
Further results about self-similar general abelian groups can be found in \cite{bru-sid}; for example, abelian torsion groups of infinite exponent are not self-similar with level-transitive action.

Not much is known about self-similarity of finite $p$-groups.
We can mention the work of \v{S}uni\'c \cite{sun}, who proved that a finite $p$-group is isomorphic to a self-similar group of automorphisms of the $p$-adic tree with abelian first level stabilizer (so in particular with an abelian maximal subgroup) if and only if it is a split extension of an elementary abelian group by a cyclic group of order $p$.
Actually, as we show in Theorem \ref{abelian maximal}, this characterization extends to all finite $p$-groups with an abelian maximal subgroup, without needing to specify any properties of its action on the tree.
On the other hand, since a direct power of a self-similar group is again self-similar
\cite[Proposition 2.9.3]{nek2}, there are self-similar $p$-groups without abelian maximal subgroups.

In this paper we show that self-similarity is a strong condition to impose on a finite $p$-group.
More specifically, in our first main theorem we prove that, for every prime $p$, there are only finitely many self-similar $p$-groups of a given rank.
This should be compared with the fact that every finite $p$-group can be faithfully represented as a group of automorphisms of the $p$-adic tree, by using its action on the coset tree corresponding to a chief series.

\begin{thmA}
Let $G$ be a finite self-similar $p$-group of rank $r$.
Then the order of $G$ is bounded by a function of $p$ and $r$.
\end{thmA}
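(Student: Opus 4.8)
The plan is to make the self-similarity concrete through a virtual endomorphism and then to extract from faithfulness a bound on the \emph{depth} of the action. First I would reduce to the case where $G$ acts transitively on the first level of the $p$-adic tree $T$: for a finite $p$-group the orbits on the first level have $p$-power size and sum to $p$, so the action is either transitive or trivial there. In the transitive case $H=\Stab_G(1)$ is normal of index $p$, and projecting onto the section at a first-level vertex yields a virtual endomorphism $\varphi\colon H\to G$; faithfulness of the action is then equivalent to the triviality of the $\varphi$-core, i.e.\ to the absence of a nontrivial $K\trianglelefteq G$ with $K\le H$ and $\varphi(K)\le K$. The intransitive case, where $G$ fixes the first level, I would dispatch by descending to the least level $\ell$ on which $G$ acts nontrivially and embedding $G$ subdirectly into the product of its sections at that level, each of which is a transitive self-similar $p$-group of rank at most $r$ and hence, by the transitive case, of bounded order; such a product has bounded nilpotency class and bounded exponent, and a $p$-group of bounded class, exponent and rank has bounded order, so the whole theorem reduces to the transitive case.

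Next I would set up the layer estimate. Write $S_n=\Stab_G(n)$, a descending chain of normal subgroups with $\bigcap_n S_n=1$ by faithfulness; since $G$ is finite there is a least $m$ with $S_m=1$, which I call the depth. Each factor $S_i/S_{i+1}$ embeds into the product, over the vertices of level $i$, of the local actions on their children, and since every such local action is a $p$-element of $\mathrm{Sym}(p)$ these factors are elementary abelian. Being sections of a group of rank $r$, they are generated by at most $r$ elements, so $|S_i/S_{i+1}|\le p^{\,r}$ and hence $|G|\le p^{\,r m}$. It thus suffices to bound the depth $m$ by a function of $p$ and $r$.

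Bounding the depth is the heart of the matter, and the step I expect to be the main obstacle. Since $\varphi$ sends $S_{i+1}$ into $S_i$, it induces $\F_p$-linear ``shift'' maps $\bar\varphi_i\colon S_{i+1}/S_{i+2}\to S_i/S_{i+1}$ between consecutive layers, each of dimension at most $r$ and compatible with the action of the order-$p$ top generator. A nontrivial element of the $\varphi$-core corresponds exactly to a nonzero family of subspaces invariant under both this action and the shift, so triviality of the $\varphi$-core forbids such families. The plan is to prove, by a module-theoretic pigeonhole argument on the $\le r$-dimensional layers over $\F_p$, that a chain of layers longer than an explicit bound $f(p,r)$ must produce such an invariant family, contradicting faithfulness; this is precisely where the prime $p$ enters, through the order-$p$ action on the layers, and it is consistent with the sharp value $p^{\,p+1}$ obtained later for groups of maximal class. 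Once the depth is bounded, $|G|\le p^{\,r m}\le p^{\,r f(p,r)}$ finishes the argument.
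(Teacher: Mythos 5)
Your first two steps are sound (the layer estimate $|S_i/S_{i+1}|\le p^{\,r}$ is correct, and $\varphi(S_{i+1})\subseteq S_i$ is right), but the heart of the argument --- bounding the depth $m$ --- is never proved: what you offer is a plan, and the plan as stated cannot work. First, your claimed correspondence is only one-way. A nontrivial $\varphi$-core $K$ does induce subspaces $V_i\le S_i/S_{i+1}$ with $\bar\varphi_i(V_{i+1})\subseteq V_i$, but a family of such subspaces does not assemble back into a subgroup of $G$, let alone a normal $\varphi$-invariant one: the set of elements whose layer components lie in the $V_i$ is not closed under multiplication, because of extension and commutator effects between layers. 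So even if pigeonhole produced an ``invariant family'', it would not contradict simplicity. Second, and more decisively, no pigeonhole on the data you allow yourself (layers of $\F_p$-dimension at most $r$, the shift maps, the conjugation action of the top generator) can bound the depth: the adding machine action of $\Z$ on the $p$-adic tree is self-similar with simple virtual endomorphism $p\Z\to\Z$, $pn\mapsto n$, all layers $S_i/S_{i+1}$ one-dimensional, all shifts isomorphisms, and the depth infinite. The layer picture of a finite self-similar $p$-group of rank $r$ is therefore locally indistinguishable from a situation with unbounded depth; finiteness must enter in an essential, non-layerwise way, and your sketch does not show how. Note also that pigeonhole would at best find two levels where the configuration repeats, i.e.\ a periodicity, whereas $\varphi$-invariance requires a subgroup mapped into \emph{itself}, not into an isomorphic copy shifted by one level.

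The paper exploits the rank hypothesis quite differently, through powerful $p$-groups, and this supplies exactly the missing mechanism. Given a simple virtual endomorphism $\phi\colon H\to G$ with $H$ maximal, one passes to a powerful normal subgroup $N=(P\cap H)^2$ of $(p,r)$-bounded index, takes a basis $(a_1,\ldots,a_d)$ of $N$ with orders $m_1\ge\cdots\ge m_d$, and shows $m_i\le e\,m_{i+1}$ where $e=\exp(G/N)$ is bounded: a larger gap at position $j$ would force, with $m=m_{j+1}$, the coincidence $\Omega_1(N^{em})=\Omega_1(N^m)\supseteq \Omega_1(G^{em})\supseteq\phi(\Omega_1(N^{em}))$, exhibiting a nontrivial normal $\phi$-invariant subgroup and contradicting simplicity; hence $|N|\le e^{d(d+1)/2}$ is $(p,r)$-bounded. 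The $\Omega_1$-coincidence across a power gap is the idea your plan lacks: it uses torsion and the power structure of powerful groups, precisely the ingredients that distinguish a finite $p$-group from the adding machine. Two minor further points: your intransitive reduction is both unnecessary and leaky --- a nontrivial self-similar $p$-group is automatically transitive on the first level (take $1\ne g\in G$ and a vertex $u$ of minimal level such that $g_u$ moves a first-level vertex of its subtree; self-similarity puts $g_u\in G$), and as written your claim that the sections at level $\ell$ form transitive self-similar groups is unjustified, since images and subgroups of self-similar groups need not be self-similar. Also, the top generator $g\notin H$ need not have order $p$, only $g^p\in H$.
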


As a consequence, the order of a self-similar $p$-group can be bounded in terms of the prime $p$ and its coclass.
We can give more detailed information in the case of finite $p$-groups of coclass $1$,
i.e.\ groups of maximal class.
As a matter of fact, in our second main theorem we fully characterize self-similar $p$-groups of maximal class.
Furthermore we obtain the best possible bound for the order of such a group.

\begin{thmB}
Let $G$ be a finite $p$-group of maximal class.
Then $G$ is self-similar if and only if $G$ possesses an elementary abelian maximal subgroup
over which $G$ splits.
If that is the case, then the order of $G$ is at most $p^{\hspace{0.5pt}p+1}$, and this bound is sharp.
\end{thmB}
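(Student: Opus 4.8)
The plan is to treat the two implications separately and then read off the bound. For sufficiency, suppose $G$ has an elementary abelian maximal subgroup $M$ over which it splits. Since $[G:M]=p$, any complement has order $p$, so $G=M\rtimes\langle x\rangle$ with $|x|=p$; this exhibits $G$ as a split extension of an elementary abelian group by a cyclic group of order $p$, and Theorem~\ref{abelian maximal} immediately yields that $G$ is self-similar. All the difficulty is therefore in the converse.

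For necessity, the first move is to observe that, by Theorem~\ref{abelian maximal}, it suffices to produce a single abelian maximal subgroup: since $G$ is already self-similar by hypothesis, that theorem would then force $G$ to be a split extension of an elementary abelian group $A$ by $C_p$, and as $[G:A]=p$ the subgroup $A$ is precisely an elementary abelian maximal subgroup complemented in $G$. I would then invoke the classical structure of groups of maximal class of order $p^n$ with $n\ge 4$: every maximal subgroup other than the two-step centralizer $G_1$ is itself of maximal class, hence nonabelian. Consequently the only possible abelian maximal subgroup is $G_1$, and (handling the groups of order $p^3$ by direct inspection) the whole problem reduces to showing that self-similarity forces $G_1$ to be abelian.

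To exploit self-similarity I would use its encoding by a virtual endomorphism: a subgroup $H$ together with a homomorphism $\varphi\colon H\to G$ whose $\varphi$-core — the largest subgroup of $H$ that is normal in $G$ and satisfies $\varphi(K)\subseteq K$ — is trivial. The top action cannot be trivial, for if $H=G$ then triviality of the core forces $\ker\varphi=1$, hence $\varphi\in\Aut(G)$, and then the characteristic subgroup $Z(G)$ would be a nontrivial $\varphi$-invariant normal subgroup inside the core; thus $H$ has index exactly $p$. Since every maximal subgroup of a $p$-group is normal, the core condition says that no nontrivial characteristic subgroup contained in $H$ is $\varphi$-invariant. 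Applying this to the lower central series — which for a group of maximal class is uniserial and coincides with the upper central series — yields $\varphi(\gamma_i)\not\subseteq\gamma_i$ for every $i$ with $2\le i\le n-1$; in particular $\varphi$ is injective on $Z(G)=\gamma_{n-1}$ and carries it outside itself. The main obstacle is to convert these \emph{escape} conditions into the vanishing of $[G_1,G_1]$: assuming $G_1$ nonabelian supplies the nontrivial characteristic subgroup $[G_1,G_1]$, and the crux is to show that the homomorphism property of $\varphi$, combined with the thinness of the subgroup lattice above $\gamma_2$ and the distinguished role of $G_1$, forces some nontrivial characteristic subgroup to become $\varphi$-invariant, contradicting triviality of the core. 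This commutator bookkeeping along the uniserial chain is where the real work lies.

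Once $G_1$ is known to be abelian, Theorem~\ref{abelian maximal} gives $G=M\rtimes\langle x\rangle$ with $M$ elementary abelian of order $p^{n-1}$ and $|x|=p$. Writing the conjugation action of $x$ on $M\cong\mathbb{F}_p^{\,n-1}$ additively as a linear operator $T$, the relation $x^p=1$ gives $T^p=I$, whence $(T-I)^p=T^p-I=0$ in characteristic $p$. On the other hand a direct computation shows $\gamma_{i+1}(G)=(T-I)^iM$ for all $i\ge 1$, so the maximal class hypothesis (class exactly $n-1$) says precisely that $T-I$ has nilpotency index $n-1$. Comparing the two facts gives $n-1\le p$, that is $|G|=p^n\le p^{\,p+1}$. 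Finally, sharpness is witnessed by the wreath product $C_p\wr C_p$: it has order $p^{p+1}$ and maximal class, its base group is an elementary abelian maximal subgroup complemented by the top cyclic factor, and so it is self-similar by the sufficiency direction already proved.
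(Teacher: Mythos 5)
Your proposal is incomplete at exactly the point where Theorem B is actually proved. The sufficiency direction, the reduction of necessity to producing an abelian maximal subgroup via Theorem \ref{abelian maximal}, and the virtual-endomorphism setup (trivial core, index $p$, the escape conditions $\phi(\gamma_i)\not\subseteq\gamma_i$) are all fine, but then you write that converting these escape conditions into $[G_1,G_1]=1$ ``is where the real work lies'' --- that is a statement of the problem, not an argument, and it is the entire content of the hard direction. The paper fills this hole with a multi-step argument: Proposition \ref{max} first pins down the domain of the simple virtual endomorphism as $G_1$ (for $H\ne G_1$ one has $H'=[H,G_2]=G_3$, and $\phi(H')=\phi(H)'\le M'\le G_3=H'$ violates simplicity); Proposition \ref{gen} shows any subgroup containing a uniform element $s$ has the form $\langle s,G_{t+1}\rangle$; and then Theorem \ref{main} splits on the degree of commutativity. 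For $\ell(G)\ge 1$ one picks a uniform $s\in\phi(G_1)\smallsetminus G_1$, and a comparison of centres ($Z(\phi(G_1))=Z(G)\le Z(G_1)$ when $|\phi(G_1)|\ge p^3$) forces $|\phi(G_1)|\le p^2$, whence $\phi(G_1')=\phi(G_1)'=1$ and $G_1'=1$ by simplicity; for $\ell(G)=0$ one separately excludes $\ker\phi\ne 1$ (via $Z(G_1)=G_{n-1}$) and $\phi(G_1)=C_G(G_{n-2})$ (via $|Z(G_1)|=p$ against $G_{n-2}\le Z(C_G(G_{n-2}))$). None of this machinery is foreshadowed in your sketch. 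Relatedly, your assertion that every maximal subgroup other than $G_1$ is of maximal class is false in general: when $\ell(G)=0$ the two-step centralizer $C_G(G_{n-2})$ is a second exceptional maximal subgroup, and ruling it out as a possible image of $\phi$ is precisely part of the paper's $\ell(G)=0$ case.

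On the positive side, the parts you do carry out are correct, and your derivation of the bound is genuinely different from the paper's. You deduce $|G|\le p^{\hspace{0.5pt}p+1}$ from the already-established splitting $G=M\rtimes\langle x\rangle$ by linear algebra: $T^p=I$ gives $(T-I)^p=0$ in characteristic $p$, while $\gamma_{i+1}(G)=(T-I)^iM$ and maximal class force $T-I$ to have nilpotency index $n-1$, so $n-1\le p$. This computation is valid. The paper instead proves the bound directly from simplicity, without invoking the characterization: if $|G|>p^{\hspace{0.5pt}p+1}$ then $G_1^p=G_p\ne 1$ by \cite[Theorem 4.9]{fer}, so $G^p=G_1^p$ is a nontrivial normal subgroup of $G$ contained in $H=G_1$ with $\phi(G^p)\subseteq G^p$, contradicting simplicity. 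Your route is cleaner but conditional on the characterization, whose necessity direction is exactly the unproved crux; as it stands, the proposal does not prove Theorem B.
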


Thus for groups of maximal class we get the same characterization obtained by
 \v{S}uni\'c, even if most groups of maximal class do not have abelian maximal subgroups.
Observe that Theorem B gives plenty of examples of finite $p$-groups which are not self-similar; in this respect, see Sections 3.1 and 8.2 of the book \cite{lee-mck} by Leedham-Green and McKay.

\vspace{10pt}

\noindent
\textit {Notation.\/}
If $G$ is a finitely generated group, we use $d(G)$ to denote the minimum number of generators of $G$.
On the other hand, if $G$ is a finite $p$-group and $i\ge 0$ is an integer, $\Omega_i(G)$ is the subgroup generated by all elements of $G$ of order at most $p^i$, and $G^{p^i}$ is the subgroup generated by all $p^i$th powers of elements of $G$.

\section{Preliminaries}

We briefly recall some concepts about the theory of groups of automorphisms of regular rooted trees.
Let $X$ be a set of cardinality $k$.
The $k$-adic tree $\TT$  is the graph whose vertices are the elements of the free monoid $X^*$ generated by $X$, and $v$ is a descendant of $u$ if and only if $v=ux$ for some $x\in X$.
The words in $X^*$ of length $n$ form the $n$th level of $\TT$.
An automorphism $g$ of $\TT$ is a bijective map from $X^*$ to itself that preserves incidence.
The section of $g$ at a vertex $u$, which we denote by $g_u$, is the automorphism of
$\TT$ which is defined by the rule
\[
g(uv) = g(u) g_u(v),
\quad
\text{for every $v\in X^*$.}
\]
The automorphisms of $\TT$ form a group $\Aut \TT$ under composition.
A subgroup $G$ of $\Aut \TT$ is \emph{self-similar\/} if $g_u\in G$ for every $u\in X^*$.

A \emph{virtual endomorphism\/} of a group $G$ is a homomorphism
$\phi:H\rightarrow G$, where $H$ is a subgroup of $G$ of finite index.
If $|G:H|=k$ then $\phi$ is said to be a virtual $\frac{1}{k}$-endomorphism.
Also, we say that $\phi$ is \emph{simple\/} if the only subgroup of $H$ which is normal in
$G$ and $\phi$-invariant (in the sense that $\phi(H)\subseteq H$) is the trivial subgroup.
Every self-similar group $G$ of automorphisms of the $k$-adic tree $\TT$ acting transitively on the first level defines a simple virtual $\frac{1}{k}$-endomorphism, by taking $H=\Stab_G(x)$, where $x\in X$, and defining $\phi:H\rightarrow G$ by $\phi(h)=h_x$.
The converse is also true: every simple virtual $\frac{1}{k}$-endomorphism
$\phi:H\rightarrow G$ can be used to define a faithful action of $G$ on $\TT$ that identifies $G$ with a self-similar group acting transitively on the first level, and in which $H$ is the stabilizer of a letter of $X$.
This is accomplished by using a coset tree; see \cite{nek} and \cite{nek2} for further details.

We are interested in the following question: if $p$ is a prime, when can a finite
$p$-group $G$ be faithfully represented as a self-similar group of automorphisms of the
$p$-adic tree?
One can easily see that a non-trivial finite $p$-group with a self-similar action on the
$p$-adic tree is transitive on the first level.
Thus, according to the previous paragraph, the question reduces to this: given a finite $p$-group $G$, when does there exist a simple virtual endomorphism
$\phi:H\rightarrow G$ for some maximal subgroup $H$ of $G$?

As mentioned in the introduction, \v{S}uni\'c gave an answer to this question when $H$ is abelian, which corresponds to the action of $G$ having an abelian first-level stabilizer.
In that case, there exists a simple virtual endomorphism $\phi:H\rightarrow G$ if and only if $H$ is elementary abelian and $G$ splits over $H$.
Next we want to show that \v{S}uni\'c's result applies to all finite $p$-groups with an abelian maximal subgroup, without any requirements about the action of the group on the tree.
We need the following lemma (see \cite[Lemma 4.6 and its proof]{isa}).

\begin{lem}
\label{properties abelian maximal}
Let $G$ be a finite $p$-group with an abelian maximal subgroup $A$.
Then the following hold:
\begin{enumerate}
\item
For every $g\in G\smallsetminus A$, we have $G'=\{[g,a] \mid a\in A\}$.
Thus $|G'|=|A:C_A(g)|$.
\item
If $G$ is not abelian, then $|G:Z(G)|=p|G'|$.
\end{enumerate}
\end{lem}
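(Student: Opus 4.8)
The plan is to prove part (i) first, since part (ii) will follow from it quickly. Fix $g\in G\smallsetminus A$; as $A$ is a maximal subgroup of a $p$-group it is normal, so $G=A\langle g\rangle$ and each commutator $[g,a]=(a^{-1})^{g}a$ lies in $A$. The first key step is to check that the map $\theta\colon A\to A$ given by $\theta(a)=[g,a]$ is a homomorphism. This is where abelianity of $A$ enters: from the identity $[g,ab]=[g,b]\,[g,a]^{b}$, together with $[g,a]\in A$ and $A$ abelian (so that $[g,a]^{b}=[g,a]$), one obtains $\theta(ab)=\theta(a)\theta(b)$. Consequently the set $\{[g,a]\mid a\in A\}$ is exactly $\imm\theta$, a subgroup of $A$, with kernel $C_A(g)$; hence $|\imm\theta|=|A:C_A(g)|$.

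Next I would identify $\imm\theta$ with $G'$. Each $[g,a]$ obviously lies in $G'$, so $\imm\theta\subseteq G'$. For the reverse inclusion it suffices to show that $\imm\theta$ is normal in $G$ with abelian quotient. Normality holds because $\imm\theta\le A$ is centralized (hence normalized) by the abelian group $A$, and is normalized by $g$ since $[g,a]^{g}=[g,a^{g}]$ with $a^{g}\in A$. The quotient $G/\imm\theta$ is then abelian: the images of the elements of $A$ commute among themselves, and the image of $g$ commutes with each of them precisely because $[g,a]\in\imm\theta$. Therefore $G'\subseteq\imm\theta$, giving $G'=\{[g,a]\mid a\in A\}$ and $|G'|=|A:C_A(g)|$, which is (i).

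For (ii), assume $G$ is non-abelian. The central claim is that $Z(G)=C_A(g)$. Since an element of $Z(G)$ must commute with both $A$ and $g$, any central element lying in $A$ is exactly an element of $C_A(g)$, so $Z(G)\cap A=C_A(g)$. To rule out central elements outside $A$, note that if some $z\in Z(G)\smallsetminus A$ existed, then $G=\langle A,z\rangle$ would be abelian ($A$ abelian, $z$ central), contradicting the hypothesis; the same reasoning shows $g\notin Z(G)$. Hence $Z(G)=C_A(g)\le A$, and combining $|G:A|=p$ with part (i) gives $|G:Z(G)|=|G:A|\,|A:C_A(g)|=p\,|G'|$.

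I expect the main obstacle to be the double role played by the set $\{[g,a]\mid a\in A\}$: showing at once that it is a \emph{subgroup} and that it is \emph{all} of $G'$. The homomorphism trick for $\theta$ settles the former, while the observation that $G/\imm\theta$ is abelian settles the latter, and both rely essentially on $A$ being abelian and normal in $G$.
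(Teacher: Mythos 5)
Your proof is correct, and it follows essentially the same route as the argument the paper relies on (it cites \cite[Lemma 4.6 and its proof]{isa} instead of proving the lemma itself): the homomorphism $\theta\colon a\mapsto [g,a]$ from $A$ onto $G'$ with kernel $C_A(g)$, together with the identification $Z(G)=C_A(g)$ when $G$ is non-abelian. Nothing further is needed.
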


\begin{thm}
\label{abelian maximal}
Let $G$ be a finite $p$-group with an abelian maximal subgroup.
Then $G$ is self-similar if and only if $G$ possesses an elementary abelian maximal subgroup over which $G$ splits.
\end{thm}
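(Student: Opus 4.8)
The plan is to exploit the dictionary recalled above: a non-trivial finite $p$-group is self-similar exactly when it admits a simple virtual endomorphism $\phi\colon H\to G$ on some maximal subgroup $H$. The "if" direction is then immediate from \v{S}uni\'c's theorem: if $G$ has an elementary abelian maximal subgroup $M$ with $G=M\rtimes\langle t\rangle$ and $|t|=p$, then $M$ is an abelian maximal subgroup over which $G$ splits, so \v{S}uni\'c's result furnishes a simple virtual endomorphism $\phi\colon M\to G$, and $G$ is self-similar. For the converse I would start from a simple virtual endomorphism $\phi\colon H\to G$ and first clear away the easy cases. If $G$ is abelian, then $H$ is abelian; and in any case, if the stabilizer $H$ happens to be abelian, \v{S}uni\'c's theorem applied to $H$ shows directly that $H$ is elementary abelian and $G$ splits over $H$, which is the desired conclusion. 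So I may assume that $G$ is non-abelian and that $H$ is non-abelian; since the hypothesised maximal subgroup $A$ is abelian, this forces $H\neq A$.

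In this main case I would set $B=H\cap A$, a normal subgroup of index $p^{2}$ which is abelian, being contained in $A$. Lemma \ref{properties abelian maximal} pins down the relevant invariants attached to $A$: for $t\in G\smallsetminus A$ we have $|G'|=|A:C_A(t)|$ and $|G:Z(G)|=p|G'|$, whence $Z(G)=C_A(t)$. Moreover $G'\leq\Phi(G)\leq H$ and $G'\leq A$ give $G'\leq B$, while $A^{p}\leq\Phi(G)\leq H$ gives $A^{p}\leq B$; choosing $t\in H\smallsetminus B$ we also get $t^{p}\in B$. Conjugation by such a $t$ is an automorphism $\sigma$ of $A$ with $\sigma^{p}=1$, and because $A$ is abelian the $G$-normal subgroups contained in $A$ are exactly the $\sigma$-invariant ones. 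The goal is to show that the mere existence of $\phi$ forces $G$ to split over an elementary abelian maximal subgroup $M$. I would argue by contradiction: assuming no such $M$ exists, I would try to manufacture, out of the characteristic subgroups $A^{p}$, $\Omega_1(A)$ and $Z(G)$ (all of which meet $H$ in $G$-normal subgroups), a non-trivial $G$-normal, $\phi$-invariant subgroup of $H$, contradicting simplicity of $\phi$. Note that $M$ need not be $A$, since a split extension $M\rtimes\langle t\rangle$ of an elementary abelian group may well possess further abelian maximal subgroups harbouring elements of order $p^{2}$; this is why the argument must locate the right maximal subgroup rather than prove that $A$ itself is elementary abelian.

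The main obstacle, and the technical heart of the proof, is the interaction of $\phi$ with these characteristic subgroups of $A$. The trouble is that $\phi$ is a homomorphism only on $H$, whereas a subgroup such as $A^{p}$ is generated by $p$-th powers $a^{p}$ of elements $a\in A$, most of which lie outside $H$; consequently one cannot evaluate $\phi$ on $A^{p}$ through the formula $\phi(a^{p})=\phi(a)^{p}$, and it is not at all clear a priori that $A^{p}$ is $\phi$-invariant. My strategy to overcome this is to restrict attention to the abelian normal subgroup $B=A\cap H$, on which $\phi$ genuinely is a homomorphism and which is itself maximal in $H$, and to study $A$ as a module over $\langle\sigma\rangle$ in the spirit of \v{S}uni\'c's module-theoretic analysis. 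The delicate point is that $\sigma$ and $\phi$ need not commute, so the largest simultaneously $\sigma$-invariant and $\phi$-invariant subgroup of $A$ cannot be read off from the socle alone; controlling this joint invariant, and showing it is non-trivial unless $A$ is elementary abelian and $G$ splits over an elementary abelian maximal subgroup, is where I expect the real work to lie.
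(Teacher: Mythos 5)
There is a genuine gap: your proposal correctly handles the ``if'' direction and makes the right opening reduction (if $H$ is abelian, \v{S}uni\'c's theorem applied to $\phi\colon H\to G$ already yields the conclusion), but in the main case ($H$ non-abelian) you only announce a strategy and explicitly defer its core --- you say you would ``manufacture'' a non-trivial $G$-normal $\phi$-invariant subgroup from $A^p$, $\Omega_1(A)$, $Z(G)$, and that controlling the joint $\sigma$- and $\phi$-invariance ``is where I expect the real work to lie.'' That deferred step is the entire content of the theorem, and your proposed route through module theory over $\langle\sigma\rangle$ and the subgroup $A^p$ is, as you yourself observe, obstructed by the fact that $\phi$ is only defined on $H$; nothing in your sketch overcomes this. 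Note also that your contradiction hypothesis (``no elementary abelian maximal subgroup $M$ with a complement exists'') is never used and is the wrong thing to negate: the efficient formulation is simply that $H$ non-abelian is impossible, after which \v{S}uni\'c applied to $H$ finishes.

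The missing idea is that the invariant subgroup you need is exactly $Z(G)$, one of your named candidates, and that its $\phi$-invariance is proved not by module analysis but by a symmetric center computation using Lemma \ref{properties abelian maximal}. Concretely, the paper argues: (1) $\phi$ is injective --- any $x\in\ker\phi\smallsetminus A$ would give $H'=[x,A\cap H]$ by Lemma \ref{properties abelian maximal} applied to $H$ (whose abelian maximal subgroup is $A\cap H$), forcing $\phi(H')=1$ and hence $H'=1$ by simplicity; thus $\ker\phi\subseteq A$, so $\ker\phi\trianglelefteq G$ and $\ker\phi=1$. (2) For $g\in H\smallsetminus A$, if $C_A(g)\not\subseteq H$ then $G'=[g,H]=H'$, so $\phi(H')\subseteq H'$, contradicting simplicity; hence $Z(G)=C_A(g)\subseteq Z(H)$. (3) The order formulas of Lemma \ref{properties abelian maximal} give $|G'|=|A:C_A(g)|=p\,|A\cap H:C_{A\cap H}(g)|=p\,|H'|$, and combined with $|G:Z(G)|=p|G'|$, $|H:Z(H)|=p|H'|$ this forces $Z(G)=Z(H)$. (4) Since $\phi$ is injective, $\phi^{-1}\colon K=\phi(H)\to G$ is again a simple virtual $\frac{1}{p}$-endomorphism with $K$ non-abelian, so the same argument yields $Z(G)=Z(K)$; then $\phi(Z(G))=\phi(Z(H))=Z(K)=Z(G)$, contradicting simplicity. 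Your instinct to hunt for a $\phi$-invariant $G$-normal subgroup was correct, but without the injectivity step and the two-sided use of $\phi$ and $\phi^{-1}$ to identify all three centers, the sketch cannot be completed as written.
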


\begin{proof}
By \v{S}uni\'c's result, we only have to prove the `only if' part.
Assume then that $G$ is self-similar, and consider a simple virtual endomorphism
$\phi:H\rightarrow G$, where $H$ is maximal in $G$.
Again by \v{S}uni\'c's theorem, it suffices to prove that $H$ is abelian.
By way of contradiction, suppose that $H$ is not abelian.
Let $A$ be an abelian maximal subgroup of $G$, which exists by hypothesis.

We first prove that $\phi$ is injective.
If there exists $x\in\ker\phi \smallsetminus A$ then, since $A\cap H$ is an abelian maximal subgroup of $H$, we have $H'=[x,A\cap H]$ by
Lemma \ref{properties abelian maximal}.
Thus $\phi(H')=1$ and, since $\phi$ is simple, we have $H'=1$.
This is contrary to our assumption.
Thus $\ker\phi \subseteq A$ and $A$ normalizes $\ker\phi$.
It follows that $\ker\phi\trianglelefteq G$, and consequently $\ker\phi=1$, as desired.

Let $g\in H\smallsetminus A$, and observe that $G'=[g,G]$ by
Lemma \ref{properties abelian maximal}.
If there exists $a\in C_A(g)\smallsetminus H$ then $G=\langle a \rangle H$ and
$G'=[g,H]$.
Hence $G'=H'$.
Since $\phi(H')\subseteq G'$, it follows that $G'=1$, which is a contradiction.
Consequently $C_A(g)\subseteq H$, in other words, $Z(G)\subseteq Z(H)$.
Then, again by Lemma \ref{properties abelian maximal},
\[
|G'| = |A:C_A(g)| = p  |A\cap H:C_A(g)| = p |A\cap H:C_{A\cap H}(g)| = p |H'|. 
\]
Since also $|G:Z(G)|=p|G'|$ and $|H:Z(H)|=p|H'|$, it follows that $Z(G)=Z(H)$.
Now if we put $K=\phi(H)$ then $\phi^{-1}$ defines a simple virtual
$\frac{1}{p}$-endomorphism from $K$ to $G$.
Thus we also have $Z(G)=Z(K)$.
Then
\[
\phi(Z(G)) = \phi(Z(H)) = Z(K) = Z(G),
\]
which is impossible, since $\phi$ is simple.
This final contradiction shows that $H$ must be abelian.
\end{proof}

The last theorem naturally raises the following question: if $G$ is a finite $p$-group with an abelian maximal subgroup $A$, is it true that $G$ is self-similar if and only if $A$ itself is elementary abelian and $G$ splits over $A$?
This is equivalent to asking whether all abelian maximal subgroups of a self-similar
$p$-group are elementary abelian.
As we next see, this question has a positive answer for odd primes, but not for the prime $2$.

\begin{cor}
Let $G$ be a finite $p$-group, where $p$ is an odd prime.
If $G$ has an abelian maximal subgroup $A$, then $G$ is self-similar if and only if $A$ is elementary abelian and $G$ splits over $A$.
\end{cor}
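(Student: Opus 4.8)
The plan is to deduce the corollary from Theorem \ref{abelian maximal} by showing that, when $p$ is odd, the abstract elementary abelian splitting guaranteed by that theorem can always be realized on the \emph{given} maximal subgroup $A$. The `if' direction is immediate: if $A$ is elementary abelian and $G$ splits over $A$, then $A$ is an elementary abelian maximal subgroup over which $G$ splits, so $G$ is self-similar by Theorem \ref{abelian maximal}. For the `only if' direction I would assume $G$ self-similar, invoke Theorem \ref{abelian maximal} to write $G=B\rtimes\langle t\rangle$ with $B$ an elementary abelian maximal subgroup and $|t|=p$, and then transfer these properties to $A$.

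The key is to analyse the action of $t$ on $B$, viewed as an $\F_p$-vector space, through the unipotent operator $S\colon b\mapsto tbt^{-1}$ and its nilpotent part $N=S-1$ (so $N^p=0$). There are two cases. If $A=B$ there is nothing to prove. If $A\neq B$, then $V:=A\cap B$ is a hyperplane of $B$, hence elementary abelian, and any $a\in A\smallsetminus B$ has the form $a=b_0t^{\hspace{0.5pt}j}$ with $j\not\equiv 0\pmod p$. Since $A$ is abelian, $a$ centralizes $V$; computing the conjugation action in $B$ (using that $B$ is abelian) shows that $a$ acts on $V$ as $S^{\hspace{0.5pt}j}$, so $V\subseteq\ker(S^{\hspace{0.5pt}j}-1)=\ker N$. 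As $V$ has codimension $1$ in $B$, this forces $N$ to have rank at most $1$, and a rank-one nilpotent operator satisfies $N^2=0$.

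The decisive point, and where the hypothesis that $p$ is odd enters, is the $p$-th power formula. Expanding $(b\hspace{0.5pt}t^{\hspace{0.5pt}j})^p$ and using $t^{\hspace{0.5pt}p}=1$ together with the identity $1+X+\cdots+X^{p-1}=(X-1)^{p-1}$ over $\F_p$, one obtains, written additively in $B$, the relation $(b\hspace{0.5pt}t^{\hspace{0.5pt}j})^p=N^{p-1}b$. When $p$ is odd we have $p-1\geq 2$, so $N^2=0$ yields $N^{p-1}=0$, whence $(b\hspace{0.5pt}t^{\hspace{0.5pt}j})^p=1$ for all $b$ and all $j$; together with $b^p=1$ for $b\in B$ this shows that $G$ has exponent $p$. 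In particular $A$, being abelian of exponent $p$, is elementary abelian, and $G$ splits over $A$, since any $g\in G\smallsetminus A$ has order $p$ and, by maximality of $A$, generates a complement $\langle g\rangle$.

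I expect the main obstacle to be the power computation and its interaction with the parity of $p$. For $p=2$ the same analysis gives only $N^{p-1}=N$, which need not vanish even when $N$ has rank $1$; this is precisely what allows a cyclic (non-elementary-abelian) maximal subgroup to survive, as in $D_8$, and explains why the statement must be restricted to odd primes. A minor point requiring care is the verification that $\ker(S^{\hspace{0.5pt}j}-1)=\ker N$ for $j\not\equiv 0\pmod p$, which holds because $S^{\hspace{0.5pt}j}-1=Nu$ for a unit $u$ that commutes with $N$.
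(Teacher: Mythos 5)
Your proof is correct, and it takes a genuinely different route from the paper's. Both arguments begin identically, invoking Theorem \ref{abelian maximal} to produce an elementary abelian maximal subgroup $B$ (the paper's $H$) with a complement of order $p$, and both exploit the same structural fact: since $A$ and $B$ are distinct abelian maximal subgroups, $A\cap B$ is centralized by $\langle A,B\rangle=G$, which in your language is the inclusion $V\subseteq\ker N$, forcing $N$ to have rank at most $1$ and hence $N^2=0$ (equivalently, $G$ has class at most $2$). Where the proofs diverge is in how the hypothesis that $p$ is odd gets cashed in. The paper writes $g=ah$ with $g$ a generator of the complement and $h\in H$, and appeals to regularity of $p$-groups --- a class-$2$ group with $p$ odd is regular, so $a=gh^{-1}$, a product of elements of order $p$, again has order $p$ --- to conclude that $A=\langle a,A\cap H\rangle$ is elementary abelian. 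You instead compute $p$-th powers directly: $(bt^{\hspace{0.5pt}j})^p$ corresponds additively to $(1+S^{\hspace{0.5pt}j}+\cdots+S^{(p-1)j})b=(S^{\hspace{0.5pt}j}-1)^{p-1}b$ via the identity $1+Y+\cdots+Y^{p-1}=(Y-1)^{p-1}$ in $\F_p[Y]$, and since $S^{\hspace{0.5pt}j}-1=Nu$ with $u$ a unit commuting with $N$, this equals $N^{p-1}u^{p-1}b$, which vanishes because $p-1\geq 2$ and $N^2=0$. (Your displayed formula $(bt^{\hspace{0.5pt}j})^p=N^{p-1}b$ omits the harmless unit factor $u^{p-1}$; only the vanishing matters.) Your computation is thus an elementary, self-contained substitute for the paper's citation of regular $p$-group theory, and it yields slightly more: when $A\neq B$, the whole group $G$ has exponent $p$. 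You also make the splitting of $G$ over $A$ explicit (any $g\in G\smallsetminus A$ of order $p$ generates a complement), a point the paper's proof leaves implicit, and your remark about $p=2$ --- where $N^{p-1}=N$ need not vanish, exactly as $D_8$ shows --- correctly locates where the parity hypothesis enters in both arguments.
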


\begin{proof}
By Theorem \ref{abelian maximal}, there exists an elementary abelian maximal subgroup $H$ over which $G$ splits.
Then $A\cap H\subseteq Z(G)$ and $G$ has class at most $2$.
Let $\langle g \rangle$ be a complement to $H$ in $G$, and write $g=ah$ with $a\in A$ and $h\in H$.
Then $a=gh^{-1}$ is of order $p$, since $G$ is a regular $p$-group.
Consequently $A=\langle a, A\cap H \rangle$ is elementary abelian.
\end{proof}

On the other hand, note that the dihedral group $D_8$ is self-similar and possesses a non-elementary abelian maximal subgroup.

\section{Finite $p$-groups of a given rank}

In this section we prove Theorem A, which shows that there are only finitely many self-similar finite $p$-groups of a given rank.
Recall that the (Pr\"ufer) \emph{rank\/} of a finite group is the maximum of the values of $d(H)$ as $H$ ranges over all subgroups of $G$.
The proof of Theorem A relies on the theory of powerful $p$-groups.
A finite $p$-group $G$ is called \emph{powerful\/} if $G'\le G^p$ for $p>2$, or $G'\le G^4$ for $p=2$.
The main results about powerful $p$-groups can be found in the books \cite{DDSMS} and \cite{khu}.

Given a finite group $G$, a tuple $(a_1,\ldots,a_d)$ is said to be a \emph{basis\/} of $G$ if every $g\in G$ can be uniquely written in the form $g=a_1^{n_1}\ldots a_d^{n_d}$ with $0\le n_i < |a_i|$ for $i=1,\ldots,d$.
By \cite[Chapter 2, Exercise 9]{DDSMS}, every powerful $p$-group has bases.
On the other hand, if $G=\langle x_1,\ldots,x_r \rangle$ is a powerful $p$-group then
$G=\langle x_1 \rangle \ldots \langle x_r \rangle$ (see \cite[Corollary 2.8]{DDSMS}).
As a consequence, all bases of $G$ have $d(G)$ elements, and the order of the elements in a basis of $G$ is irrelevant.
In the remainder, when we consider a basis $(a_1,\ldots,a_d)$ of a powerful $p$-group, we will always assume that $|a_1|\ge \cdots \ge |a_d|$.

As we next see, a basis of a powerful $p$-group $G$ can be used to obtain the subgroup $\Omega_1(G)$ if $p$ is odd, and the same result is true for $G^2$ if $p=2$.

\begin{lem}
\label{omega1pow}
Let $G$ be a powerful $p$-group, and let $N=G^2$.
If $\{a_1,\ldots,a_d\}$ is a basis of $N$, and $|a_i|=m_i$ for every $i=1,\ldots,d$, then
\[
\Omega_1(N) = \langle a_i^{m_i/p} \mid i=1,\ldots,d \rangle.
\]
\end{lem}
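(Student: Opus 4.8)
The plan is to prove the non-trivial inclusion by an order count, showing that both $\Omega_1(N)$ and the subgroup $R:=\langle a_i^{m_i/p} \mid i=1,\ldots,d\rangle$ have order $p^d$. The inclusion $R\subseteq \Omega_1(N)$ is immediate: discarding any trivial basis elements we may assume each $a_i\neq 1$, so $m_i\geq p$ is a power of $p$ and $a_i^{m_i/p}$ has order exactly $p$; hence it lies in $\Omega_1(N)$, and so does $R$. It therefore suffices to check that $R$ and $\Omega_1(N)$ have the same order, and I would compute each side separately. Throughout I would use that $N=G^2$ is again powerful, being a power of the powerful group $G$ (and with $N=G$ when $p$ is odd, since in a group of odd order every element is a square, so $G^2=G$).

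For the lower bound on $|R|$, I would exploit the uniqueness of basis expressions in $N$ rather than any commutation. For each tuple $(c_1,\ldots,c_d)$ with $0\leq c_i<p$, the element $a_1^{c_1 m_1/p}\cdots a_d^{c_d m_d/p}$ lies in $R$, being a product of powers of the generators $a_i^{m_i/p}$. Its exponents satisfy $0\leq c_i m_i/p<m_i$, so by uniqueness of the basis representation two such elements coincide only if all the $c_i$ agree. These $p^d$ elements are therefore pairwise distinct, whence $|R|\geq p^d$. Note that no commutativity of the deepest powers is needed for this step, which sidesteps the usual subtlety about whether $\Omega_1(N)$ is elementary abelian.

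For the order of $\Omega_1(N)$, I would invoke the power structure of powerful $p$-groups. Since $N'\leq N^p$ (for $p$ odd by definition, and for $p=2$ via $N'\leq N^4\leq N^2=N^p$), the Frattini subgroup satisfies $\Phi(N)=N^p$, so the Burnside basis theorem gives $|N:N^p|=p^{d(N)}=p^{d}$. The crucial input is then the identity $|\Omega_1(N)|=|N:N^p|$, together with $\Omega_1(N)=\{x\in N\mid x^p=1\}$, coming from the good behaviour of the $p$-power map on a powerful group. Combining, $|\Omega_1(N)|=p^{d}=|R|$, and with $R\subseteq\Omega_1(N)$ this forces $R=\Omega_1(N)$, as required.

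The main obstacle is precisely this last identity in the case $p=2$: for a general powerful $2$-group the equalities $\Omega_1(N)=\{x\mid x^2=1\}$ and $|\Omega_1(N)|=|N:N^2|$ can fail. This is exactly why the statement is phrased for $N=G^2$ and not for $G$ itself: the subgroup $G^2$ of a powerful $2$-group is powerfully embedded and exhibits the same uniform power behaviour as powerful groups for odd primes, so the odd-$p$ arguments apply to it without change. Pinning down this uniform behaviour of $G^2$ (equivalently, that $x\mapsto x^p$ is suitably well behaved on $N$, giving $|\{x\in N\mid x^p=1\}|=|N:N^p|$) is the heart of the proof; the remaining steps are routine bookkeeping with the basis.
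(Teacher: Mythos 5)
Your proposal is correct and takes essentially the same route as the paper: the paper likewise computes $|N:N^p|=|N:\Phi(N)|=p^d$ from powerfulness of $N$ and the Burnside basis theorem, obtains $|\Omega_1(N)|=p^d$ from Theorem 1 of \cite{fer2} --- which is precisely the ``crucial identity'' you flag as the heart of the matter, and exactly why the lemma is stated for $N=G^2$ rather than $G$ when $p=2$ --- and finishes with the same lower bound $p^d$ for $\langle a_i^{m_i/p}\rangle$ coming from uniqueness of basis expressions. Note that only the inequality $|\Omega_1(N)|\le |N:N^p|$ is really needed, since your counting argument for $R$ then forces equality, so your proof is complete once that known result is cited.
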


\begin{proof}
Since $N$ is powerful and $d=d(N)$, we have $|N:N^p|=|N:\Phi(N)|=p^d$.
It follows that $\norm{\Omega_1(N)}=p^d$, by \cite[Theorem 1]{fer2}.
On the other hand, since $\{a_1,\ldots,a_d\}$ is a basis of $N$, the subgroup
\[
\langle a_i^{m_i/p} \mid i=1,\ldots,d \rangle
\]
of $\Omega_1(N)$ has cardinality at least $p^d$.
This completes the proof.
\end{proof}

Of course, $G^2$ coincides with $G$ if $p$ is odd, and stating the last lemma for $G^2$ is only a trick to avoid a case distinction according as $p$ is odd or $p=2$.

\vspace{5pt}

We can now prove Theorem A.
We use the expression `$(p,r)$-bounded' as shorthand for `bounded above by a function of $p$ and $r$'.

\begin{thm}
\label{thm A}
Let $G$ be a self-similar finite $p$-group of rank $r$.
Then the order of $G$ is $(p,r)$-bounded.
\end{thm}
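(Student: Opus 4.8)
The plan is to exploit the tension between two facts: self-similarity forces the existence of a \emph{simple} virtual endomorphism $\phi:H\to G$ with $H$ maximal, while the rank $r$ controls the structure of $G$ via powerful subgroups. Recall that the simplicity of $\phi$ means no non-trivial $\phi$-invariant normal subgroup of $G$ sits inside $H$. My strategy is to find a large normal subgroup that \emph{is} forced to be $\phi$-invariant once $|G|$ is too big, contradicting simplicity. Since every finite $p$-group of rank $r$ contains a powerful characteristic subgroup $P$ of $(p,r)$-bounded index (this is the standard Lubotzky--Mann theory in \cite{DDSMS}), and $P$ is generated by $d(P)\le r$ elements, the interesting behaviour is concentrated on $P$, whose order we must bound.

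The key idea I would pursue is that a simple virtual endomorphism cannot shrink the group too efficiently: each application of $\phi$ is a virtual $\tfrac1p$-endomorphism, so $|\phi(H)|=|H|=|G|/p$, and iterating $\phi$ produces a descending chain. First I would set $N=P^2$ (using the $G^2$ trick of Lemma \ref{omega1pow} to treat $p=2$ uniformly) and focus on $\Omega_1(N)$, which by Lemma \ref{omega1pow} has order exactly $p^{d(N)}\le p^r$, and which is characteristic, hence normal in $G$. The plan is to show that if the exponent of $P$ were large, then $\phi$ would eventually have to map some characteristic subgroup related to $\Omega_1$ into $H$ and fix it, violating simplicity. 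Concretely, a basis $(a_1,\dots,a_d)$ of $N$ gives the layered structure $\Omega_i(N)=\langle a_j^{m_j/p^i}\rangle$, and one tracks how $\phi$ acts on these bounded-rank characteristic layers.

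The mechanism I expect to work is a pigeonhole on exponents. Because $|H:\Omega_1(N)\cap H|$ and the number of characteristic subgroups of bounded rank are controlled, while $\phi$ only divides the order by $p$ at each step, a sufficiently tall chain of powers $a_i^{m_i/p},a_i^{m_i/p^2},\dots$ cannot all avoid being pulled into a $\phi$-invariant normal subgroup contained in $H$. More precisely, I would argue that if the exponent $p^e$ of $P$ exceeds a $(p,r)$-bounded threshold, then some non-trivial characteristic subgroup $\Omega_k(N)$, or an intersection of iterated $\phi$-preimages of it, becomes $\phi$-invariant and lands in $H$, and being characteristic it is normal in $G$ — forcing it to be trivial by simplicity, a contradiction. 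Thus the exponent of $P$ is $(p,r)$-bounded; combined with $d(P)\le r$ and $|P|\le(\exp P)^{d(P)}$ this bounds $|P|$, and hence $|G|=|G:P|\,|P|$ is $(p,r)$-bounded.

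The hard part will be the pigeonhole/invariance step: turning ``the exponent is large'' into ``an honest characteristic subgroup inside $H$ is $\phi$-invariant'' requires care, because $\phi$ need not map characteristic subgroups of $G$ into characteristic subgroups, and $\phi$ is only defined on $H$, so one must track which powers $a_i^{m_i/p^k}$ actually lie in $H$ and survive under $\phi$. Controlling the interaction between the basis of $N$, the subgroup $H$, and the map $\phi$ simultaneously — and ensuring the invariant subgroup one produces is genuinely non-trivial — is where the real work lies; the reductions to powerful $P$ and the exponent bound via $d(P)\le r$ are comparatively routine once that core obstruction is resolved.
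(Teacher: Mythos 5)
Your outer skeleton coincides with the paper's: start from a simple virtual endomorphism $\phi:H\rightarrow G$ with $H$ maximal, pass to a powerful subgroup of $(p,r)$-bounded index via Lubotzky--Mann, and derive a contradiction with simplicity from a characteristic $\Omega_1$-type subgroup if the orders involved are too large. But the core invariance step --- exactly the one you label ``the hard part'' --- is left as a hope rather than an argument, and as stated it would fail in two ways. First, with $N=P^2$ there is no reason the elements $a_i^{m_i/p^k}$, or the subgroups $\Omega_k(N)$, lie in $H$ at all, so $\phi$ cannot even be applied to them; the paper's fix is to set $N=(P\cap H)^2$, which is contained in $H$ and normal in $G$ (here $H$ is maximal, hence normal), so that \emph{every} characteristic subgroup of $N$ is simultaneously in the domain of $\phi$ and normal in $G$ --- this is also why the paper takes $P$ with the property that every $G$-normal subgroup of $P$ is powerful, rather than merely $P$ powerful. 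Second, your pigeonhole mechanism has no engine: $\phi$ has no reason to map $\Omega_k(N)$ back into $N$ (its image only sits in $\Omega_k(G)$), and intersections of iterated $\phi$-preimages are subgroups of $H$ with no normality in $G$, so simplicity cannot be invoked on them. (Your aside $|\phi(H)|=|H|$ is also unjustified: a simple virtual endomorphism need not be injective, since $\ker\phi$ is $\phi$-invariant and normal in $H$ but not necessarily in $G$.)

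The paper's actual mechanism is a single application of $\phi$, with no iteration and no pigeonhole, exploiting that homomorphisms respect power maps --- the honest substitute for your hoped-for preservation of $\Omega$-layers. With a basis $(a_1,\dots,a_d)$ of $N$, $m_i=|a_i|$ and $e=\exp(G/N)$ (which is $(p,r)$-bounded), one bounds each \emph{gap}: $m_i\le e\,m_{i+1}$. If instead $m_j> e m_{j+1}=em$, then since every element of the powerful group $N^{em}$ is an $em$-th power, one gets
\[
\phi\bigl(\Omega_1(N^{em})\bigr)\subseteq \Omega_1(G^{em})\subseteq \Omega_1(N^m),
\]
using $G^e\subseteq N$, while the gap hypothesis together with Lemma \ref{omega1pow} forces
$\Omega_1(N^m)=\langle a_1^{m_1/p},\dots,a_j^{m_j/p}\rangle=\Omega_1(N^{em})$. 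Thus $\Omega_1(N^{em})$ is a nontrivial, characteristic-in-$N$ (hence $G$-normal), $\phi$-invariant subgroup of $H$, contradicting simplicity. The gap bounds yield $m_i\le e^{d-i+1}$ and hence $|N|=m_1\cdots m_d\le e^{d(d+1)/2}$ directly --- note the paper bounds $|N|$ outright, not first the exponent. Your reduction to $P$ and the final assembly $|G|=|G:N|\,|N|$ are fine; the missing idea is the power-subgroup trick $\Omega_1(N^{em})=\Omega_1(N^m)$ combined with the normalization $N=(P\cap H)^2\subseteq H$.
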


\begin{proof}
Let $\phi:H\rightarrow G$ be a simple virtual endomorphism from a maximal subgroup $H$ of $G$.
By Proposition 2.12 and Theorem 2.13 of \cite{DDSMS}, there exists a normal subgroup $P$ of $G$ of $(p,r)$-bounded index with the property that every subgroup of $P$ which is normal in $G$ is powerful.
If we put $N=(P\cap H)^2$, then $N$ has also $(p,r)$-bounded index in $G$, and it suffices to show that $N$ has $(p,r)$-bounded order.

Let us consider a basis $(a_1,\ldots,a_d)$ of the powerful group $N$, and observe that $d=d(N)\le r$.
Set $\norm{a_i}=m_i$ for $i=1,\ldots,d$, and $m_{d+1}=1$.
If $e$ denotes the exponent of $G/N$, we claim that $m_i\le e m_{i+1}$ for every $i=1,\ldots,d$.
By way of contradiction, assume that $m_j> e m_{j+1}$ for some $j$, and for simplicity, put $m=m_{j+1}$.
Now, we have
\begin{equation}
\label{phi of omega}
\phi(\Omega_1(N^{em})) \subseteq \Omega_1(G^{em}) \subseteq \Omega_1(N^m),
\end{equation}
since $G^e\subseteq N$.
On the other hand, by \cite[Theorem 2.7]{DDSMS},
\[
N^m = \langle a_1^m \rangle \ldots \langle a_d^m \rangle
= \langle a_1^m \rangle \ldots \langle a_j^m \rangle,
\]
and $(a_1^m,\ldots,a_j^m)$ is a basis of $N^m$.
Since $|a_i|>em$ for $i=1,\ldots,j$, we similarly obtain that
$(a_1^{em},\ldots,a_j^{em})$ is a basis of $N^{em}$.
By Lemma \ref{omega1pow},
\[
\Omega_1(N^m) = \langle a_1^{m_1/p},\ldots,a_j^{m_j/p} \rangle
= \Omega_1(N^{em}).
\]
Thus, by (\ref{phi of omega}), we get
$\phi(\Omega_1(N^{em})) \subseteq \Omega_1(N^{em})$.
Since $\phi$ is simple, this implies that $\Omega_1(N^{em})=1$.
This contradiction proves the claim.
It follows that $m_i\le e^{d-i+1}$ for all $i=1,\ldots,d$, and consequently
\[
|N| = m_1\ldots m_d \le e^{d(d+1)/2}.
\]
Since $d\le r$ and $e$ is $(p,r)$-bounded, we conclude that the order of $N$ is $(p,r)$-bounded, as desired.
\end{proof}

\begin{cor}
Let $G$ be a self-similar finite $p$-group of coclass $s$.
Then the order of $G$ is $(p,s)$-bounded.
\end{cor}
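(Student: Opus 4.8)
The plan is to deduce the corollary directly from Theorem \ref{thm A}: the only thing that needs to be supplied is a bound on the rank. Concretely, I would show that the rank $r$ of a finite $p$-group of coclass $s$ is itself $(p,s)$-bounded; once this is known, Theorem \ref{thm A} immediately gives that $|G|$ is $(p,r)$-bounded, and hence $(p,s)$-bounded, which is exactly the assertion.

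That the rank of a finite $p$-group of coclass $s$ is $(p,s)$-bounded is a standard consequence of coclass theory, and the cleanest route is to quote it from \cite{lee-mck}. If a self-contained justification is wanted, one argues as follows. By the coclass theorems there are only finitely many infinite pro-$p$ groups of coclass $s$, and, up to a normal subgroup of $(p,s)$-bounded order, every sufficiently large finite $p$-group $G$ of coclass $s$ is a quotient of one of these pro-$p$ groups. Since an infinite pro-$p$ group of coclass $s$ is $p$-adic analytic, it has finite rank bounded by a function of $p$ and $s$; passing to a quotient does not increase the rank, so after removing the bounded piece we are left with a group of $(p,s)$-bounded rank.

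It then remains to note that enlarging this subgroup back to $G$ costs only a bounded amount of rank. If $N\trianglelefteq G$ with $|G:N|=p^k$ and $k$ is $(p,s)$-bounded, then for every subgroup $H\le G$ the intersection $H\cap N$ is normal in $H$ with $|H:H\cap N|\le p^k$, so $d(H)\le d(H\cap N)+k$. Taking the maximum over all $H\le G$ shows that the rank of $G$ is at most the rank of $N$ plus $k$, and therefore $(p,s)$-bounded. Feeding the resulting $r=r(p,s)$ into Theorem \ref{thm A} completes the argument.

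The whole proof is a reduction, so there is no genuinely difficult step internal to it; all the weight rests on the external input that coclass bounds rank. The only point requiring care is to invoke the coclass-theoretic rank bound in a form valid for every prime, including $p=2$, where parts of the structure theory take a slightly different shape.
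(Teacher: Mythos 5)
Your overall strategy is exactly the paper's: reduce to Theorem \ref{thm A} by showing that the rank of a finite $p$-group of coclass $s$ is $(p,s)$-bounded. The paper obtains the rank bound from the \emph{effective} coclass results of Shalev \cite[Theorems 1.2 and 1.5]{sha}: $G$ has a powerful characteristic subgroup $N$ of $(p,s)$-bounded index with at most $p^{s+1}$ generators, whence $\mathrm{rk}(N)\le p^{s+1}$ by \cite[Theorem 2.9]{DDSMS}, and the bounded index then bounds $\mathrm{rk}(G)$ precisely by your observation $d(H)\le d(H\cap N)+k$. So the reduction itself, and the index-versus-rank estimate, are fine and match the paper.

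The gap lies in your ``self-contained justification.'' The claim that, modulo a normal subgroup of $(p,s)$-bounded order, every sufficiently large finite $p$-group of coclass $s$ is a quotient of one of the finitely many infinite pro-$p$ groups of coclass $s$ is not one of the coclass theorems and cannot be taken for granted: in the coclass graph only the mainline groups are quotients of the infinite pro-$p$ groups, while a general group of coclass $s$ is a descendant whose distance to the mainline (the depth of its branch) is not bounded in terms of $p$ and $s$ in general, so the ``bounded piece'' you propose to remove need not have bounded order. The correct quotable inputs are the effective statements of Leedham-Green and Shalev, such as the one the paper cites, from which the rank bound follows at once via the powerful-group theory. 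There is also a minor mismatch in your final step: your structural claim would produce a normal subgroup $K$ of bounded \emph{order} with $G/K$ of bounded rank, whereas your concluding lemma treats a subgroup $N$ of bounded \emph{index}; the bounded-order configuration needs the (equally easy) estimate $d(H)\le d\bigl(H/(H\cap K)\bigr)+d(H\cap K)\le \mathrm{rk}(G/K)+\log_p|K|$. Once the rank bound is invoked in a correct effective form, the rest of your argument coincides with the paper's proof, and no issue of parity of $p$ arises, since Shalev's results and the powerful-group machinery (with $G^4$ in place of $G^p$ for $p=2$) cover all primes.
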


\begin{proof}
According to \cite[Theorems 1.2 and 1.5]{sha}, $G$ contains a powerful characteristic subgroup $N$ with $(p,s)$-bounded index and at most $p^{s+1}$ generators.
Thus $\mathrm{rk}(N)\le p^{s+1}$, by \cite[Theorem 2.9]{DDSMS}.
Consequently $G$ has $(p,s)$-bounded rank, and the result follows from
Theorem~\ref{thm A}. 
\end{proof}

\begin{cor}
Let $G$ be a self-similar finite $d$-generator $p$-group of class $c$.
Then the order of $G$ is $(p,d,c)$-bounded.
\end{cor}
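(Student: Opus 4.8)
The plan is to reduce the statement to Theorem \ref{thm A} by bounding the rank of $G$ in terms of $d$ and $c$ alone. Once we know that the rank $r$ of a $d$-generator $p$-group of class $c$ is $(d,c)$-bounded, self-similarity enters only through Theorem \ref{thm A}, which then gives that $|G|$ is $(p,r)$-bounded and hence $(p,d,c)$-bounded. Note that self-similarity is genuinely needed: a $2$-generator abelian $p$-group can have arbitrarily large order, so no bound is possible without that hypothesis.

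To bound the rank, I would exploit the lower central series $G=\gamma_1(G)\ge\gamma_2(G)\ge\cdots\ge\gamma_{c+1}(G)=1$. For an arbitrary subgroup $H\le G$, consider the induced series $H\cap\gamma_i(G)$ and estimate $d(H)$ by the sum of the generator numbers of the successive quotients $(H\cap\gamma_i(G))/(H\cap\gamma_{i+1}(G))$. Each such quotient embeds into $\gamma_i(G)/\gamma_{i+1}(G)$, which is abelian and generated by the images of the basic commutators of weight $i$ in the $d$ chosen generators of $G$. Since the number $N_i(d)$ of basic commutators of a given weight $i$ on $d$ letters depends only on $d$ and $i$, and since a subgroup of a $p$-group $A$ satisfies $d(\text{subgroup})\le d(A)$ when $A$ is abelian, we obtain $d\bigl((H\cap\gamma_i(G))/(H\cap\gamma_{i+1}(G))\bigr)\le N_i(d)$ for every $i$. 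Summing over $i$ yields $d(H)\le\sum_{i=1}^{c}N_i(d)$, a bound independent of $H$.

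Consequently the rank $r=\max_H d(H)$ satisfies $r\le\sum_{i=1}^{c}N_i(d)$, which is $(d,c)$-bounded. Applying Theorem \ref{thm A} to the self-similar group $G$ now shows that $|G|$ is $(p,r)$-bounded, and therefore $(p,d,c)$-bounded, as required.

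The only delicate point I anticipate is the justification that $\gamma_i(G)/\gamma_{i+1}(G)$ is generated by the images of the basic commutators of weight $i$, together with the finiteness and $(d,i)$-dependence of their number. This is a standard consequence of commutator collection, so once Theorem \ref{thm A} is in hand the argument is essentially routine.
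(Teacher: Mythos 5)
Your proof is correct and takes essentially the same route as the paper: the paper's own proof is a one-line reduction to Theorem~\ref{thm A} via the standard fact that the rank of a $d$-generator nilpotent group of class $c$ is $(d,c)$-bounded. The only difference is that you additionally prove that standard fact (via the intersections $H\cap\gamma_i(G)$ and the commutators of weight $i$ in the generators, whose number is $(d,i)$-bounded), which the paper simply cites as known, and your argument for it is sound.
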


\begin{proof}
This follows immediately from Theorem~\ref{thm A}, since the rank of a $d$-generator nilpotent group of class $c$ is $(d,c)$-bounded.
\end{proof}

Of course, the number $d$ of generators is necessary in the bound of the previous corollary, since elementary abelian $p$-groups are all self-similar.

\vspace{8pt}

The following result, whose proof is contained in the proof of Theorem \ref{thm A}, may be of independent interest.

\begin{pro}
Let $G$ be a self-similar finite $p$-group, and let $\phi:H\rightarrow G$ be a simple virtual endomorphism, with $H$ maximal in $G$.
If $U\subseteq H$ is a uniform normal subgroup of $G$ then
$\exp(U)\le \exp(G/U)$ if $p$ is odd, and $\exp(U)\le 4\exp(G/U)$ if $p=2$.
\end{pro}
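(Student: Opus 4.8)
The plan is to mimic the key mechanism in the proof of Theorem~\ref{thm A}, exploiting that a uniform group behaves very rigidly under the operators $\Omega_1$ and $p$-th powers. Throughout, write $p^k=\exp(U)$ and $e=\exp(G/U)=p^t$, so that $G^e\subseteq U$; we may assume $U\neq 1$, as otherwise there is nothing to prove. The heart of the matter is the following stability property: if $U$ is uniform of exponent $p^k$ with basis $(a_1,\dots,a_d)$, then
\[
\Omega_1(U^{p^s})=\langle a_1^{p^{k-1}},\dots,a_d^{p^{k-1}}\rangle=\Omega_1(U)
\]
for a whole range of exponents $s$. For $p$ odd this follows from Lemma~\ref{omega1pow} applied to $U^{p^s}$ directly (note $(U^{p^s})^2=U^{p^s}$ since $p$ is odd), and is valid whenever $0\le s\le k-1$; for $p=2$ one writes $U^{2^s}=(U^{2^{s-1}})^2$ and applies the lemma to this presentation, which forces $s\ge 1$ and is precisely the source of the extra factor in the case $p=2$.

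First I would treat the odd case. Suppose, for a contradiction, that $\exp(U)>\exp(G/U)$, that is, $k\ge t+1$, so that $t\le k-1$ and hence $\Omega_1(U^e)=\Omega_1(U)\neq 1$ by the stability property. Since $G^e\subseteq U$ and $\phi$ is a homomorphism defined on $U\subseteq H$, we obtain
\[
\phi(\Omega_1(U^e))\subseteq \Omega_1(G^e)\subseteq \Omega_1(U)=\Omega_1(U^e),
\]
where the first inclusion holds because $\phi(U^e)=\phi(U)^e\subseteq G^e$ while $\phi$ preserves the property of having order dividing $p$, and the second because $G^e\subseteq U$. Thus $\Omega_1(U^e)$ is a subgroup of $H$ which is normal in $G$ (being characteristic in the normal subgroup $U$) and $\phi$-invariant. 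Simplicity of $\phi$ then forces $\Omega_1(U^e)=1$, a contradiction. Hence $\exp(U)\le\exp(G/U)$.

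For $p=2$ I would run the same argument one power of $2$ lower, to accommodate the restriction $s\ge 1$. Assuming $\exp(U)>4\exp(G/U)$, that is, $k\ge t+3$, the stability property gives $\Omega_1(U^{2e})=\Omega_1(U)=\Omega_1(U^2)$, since both exponents $t+1$ and $1$ lie in the admissible range $[1,k-1]$. As $G^{2e}=(G^e)^2\subseteq U^2$, the same chain of inclusions now reads
\[
\phi(\Omega_1(U^{2e}))\subseteq \Omega_1(G^{2e})\subseteq \Omega_1(U^2)=\Omega_1(U^{2e}),
\]
and simplicity again forces the nontrivial group $\Omega_1(U^{2e})=\Omega_1(U)$ to be trivial, a contradiction. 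Therefore $\exp(U)\le 4\exp(G/U)$. (In fact the chain already produces a contradiction from $k\ge t+2$, so the argument delivers the sharper bound $\exp(U)\le 2\exp(G/U)$.)

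I expect the main obstacle to be the bookkeeping around the case $p=2$: one must feed $\Omega_1$ a subgroup presented as a square in order to invoke Lemma~\ref{omega1pow}, and keeping precise track of how many powers of $2$ this costs is what produces the constant in the statement rather than the value~$1$. A secondary point requiring care is verifying at each stage that the subgroup to which simplicity is applied genuinely lies inside $H$ and is normal in $G$ — both of which follow cleanly from $U\subseteq H$ and $U\trianglelefteq G$ — and that $\phi$ indeed carries the relevant bottom layer $\Omega_1(U^e)$ (respectively $\Omega_1(U^{2e})$) into $\Omega_1(G^e)$ (respectively $\Omega_1(G^{2e})$), which uses only that $\phi$ is a homomorphism on all of $H\supseteq U$ together with $G^e\subseteq U$.
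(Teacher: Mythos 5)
Your proof is correct and is essentially the paper's own argument: the paper simply invokes the claim $m_i\le e\,m_{i+1}$ established inside the proof of Theorem~\ref{thm A}, applied to $N=U^2$, whereas you inline exactly that $\Omega_1$-stability mechanism (Lemma~\ref{omega1pow} together with simplicity of $\phi$), with uniformity giving the clean identity $\Omega_1(U^{p^s})=\Omega_1(U)$ across the admissible range of $s$. Your more careful bookkeeping for $p=2$ --- applying the lemma to $U^{2^s}=(U^{2^{s-1}})^2$ directly, instead of routing both $\exp(U)\le 2\exp(N)$ and $\exp(G/N)\le 2\exp(G/U)$ through $N=U^2$ --- even yields the sharper bound $\exp(U)\le 2\exp(G/U)$, which of course implies the stated $\exp(U)\le 4\exp(G/U)$.
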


\begin{proof}
Put $N=U^2$, and use the same notation as in the proof of Theorem \ref{thm A}.
Since $N$ is uniform, we have $\exp(N)=m_d$, and on the other hand,
$m_d\le \exp(G/N)$ by the proof of Theorem \ref{thm A}.
Thus $\exp(N)\le \exp(G/N)$, and the result follows.
\end{proof}

\section{$p$-groups of maximal class}

In this final section, we deal with finite $p$-groups of coclass $1$, also known as $p$-groups of maximal class.
We prove Theorem B, which gives a full characterization of self-similar $p$-groups of maximal class and determines the best bound for the order of a self-similar $p$-group of maximal class.
Observe that, by Theorem \ref{abelian maximal}, we need only consider $p$-groups of maximal class of order at least $p^4$.

Before proceeding, we need to introduce some concepts and results from the theory of groups of maximal class, for which we refer the reader to \cite{fer} and \cite[Chapter III, Section 14]{hup}.
A $p$-group $G$ of order $p^n$ with $n\ge 2$ is said to be of maximal class if its nilpotency class is $n-1$.
If we put $G_i=\gamma_i(G)$ for every $i\ge 2$, this means that $|G:G_2|=p^2$ and
$|G_i:G_{i+1}|=p$ for every $i=2,\ldots,n-1$.
The upper central series of a $p$-group of maximal class coincides with its lower central series; in particular, $Z(G)$ is of order $p$ for $n\ge 3$.
Actually, it is easy to describe all normal subgroups of $G$: we only have to add the $p+1$ maximal subgroups to the terms of the lower central series of $G$.

For a group of maximal class $G$ of order $p^n\ge p^4$, we put $G_1=C_G(G_2/G_4)$, so that $|G:G_1|=|G_1:G_2|=p$.
Then the largest integer $\ell\in\{0,\ldots,n-3\}$ such that $[G_i,G_j]\le G_{i+j+\ell}$ for all
$i,j\ge 1$ is called the \emph{degree of commutativity\/} of $G$, and we write $\ell=\ell(G)$.
A fundamental fact in the theory of $p$-groups of maximal class is that $\ell(G/Z(G))\ge 1$ for all $n\ge 5$.
As a consequence, all the so-called two-step centralizers $C_G(G_i/G_{i+2})$ (which are maximal subgrups of $G$) coincide with $G_1$ for $i=2,\ldots,n-3$, and the remaining two-step centralizer, i.e.\ $C_G(G_{n-2})$, is equal to $G_1$ if and only if $\ell(G)\ge 1$.
Another key result that we will need is that a finite $p$-group is of maximal class if and only if it contains an element $s$ whose centralizer is of order $p^2$.
The elements satisfying this property are exactly the elements in the difference
$G\smallsetminus (G_1\cup C_G(G_{n-2}))$, and they are called
\emph{uniform elements\/} of $G$.
If $s$ is a uniform element, then the order of $s$ is $p$ or $p^2$, and $s^p\in Z(G)$.
Also, for every $1\le i\le n-2$, if $x\in G_i\smallsetminus G_{i+1}$ then we have
$[x,s]\in G_{i+1}\smallsetminus G_{i+2}$.

Our analysis of the representation of a $p$-group of maximal class $G$ as a self-similar group of automorphisms of the $p$-adic tree requires a thorough understanding of the subgroups of $G$ which contain a uniform element.
Certainly, these groups are of maximal class again, but much more can be said.
We need the following straightforward lemma.

\begin{lem}
\label{ll}
Let $G$ be a $p$-group of maximal class of order $p^n\ge p^4$, and let $K$ be a subgroup of $G$ which contains a uniform element.
If there exists an element $x$ of $K$ lying in the difference $G_t \setminus G_{t+1}$, then
$G_t \leq K$.
\end{lem}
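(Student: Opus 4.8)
The plan is to exploit the commutator property of uniform elements recalled just before the lemma: if $s$ is uniform and $y\in G_i\setminus G_{i+1}$ with $1\le i\le n-2$, then $[y,s]\in G_{i+1}\setminus G_{i+2}$. Since $K$ contains a uniform element $s$ as well as an element $x\in G_t\setminus G_{t+1}$, I would use repeated commutation with $s$ to manufacture, \emph{inside} $K$, one element in each factor of the lower central series from $G_t$ all the way down to $Z(G)$, and then check that these elements already generate $G_t$. Note first that all the relevant subgroups $G_t,G_{t+1},\ldots,G_{n-1}$ form a chain in which each term has index $p$ in the previous one, and that $G_{n-1}=Z(G)$ has order $p$ while $G_n=1$.

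Concretely, I would first dispose of the top case $t=n-1$: then $G_t=Z(G)$ has order $p$ and $x\neq 1$, so $G_t=\langle x\rangle\le K$ and there is nothing to prove. Assuming $t\le n-2$, set $x_0=x$ and define iteratively $x_j=[x_{j-1},s]$. Every $x_j$ lies in $K$, being an iterated commutator of the elements $x,s\in K$. Applying the commutator property of $s$ step by step, I would obtain $x_j\in G_{t+j}\setminus G_{t+j+1}$ for every $j$ with $0\le j\le n-1-t$, the final element $x_{n-1-t}$ landing in $G_{n-1}\setminus G_n=Z(G)\setminus\{1\}$. It then remains to show that $\langle x_0,\ldots,x_{n-1-t}\rangle=G_t$, which I would prove by downward induction on $j$: since $G_{t+j}/G_{t+j+1}$ is cyclic of order $p$ and $x_j$ maps to a generator of it, we have $G_{t+j}=\langle x_j\rangle\,G_{t+j+1}$, and combining this with the inductive description of $G_{t+j+1}$ (the base case $G_{n-1}=\langle x_{n-1-t}\rangle$ being immediate from $|Z(G)|=p$) gives $G_{t+j}=\langle x_j,\ldots,x_{n-1-t}\rangle$. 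Taking $j=0$ yields $G_t=\langle x_0,\ldots,x_{n-1-t}\rangle\le K$, as desired.

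The one point that requires genuine care, rather than being purely mechanical, is the bookkeeping on the indices: I must verify that the rule $[y,s]\in G_{i+1}\setminus G_{i+2}$ is invoked only for $1\le i\le n-2$, so that the chain of manufactured elements reaches exactly $Z(G)$ and no commutator is formed past it. Everything else reduces to the elementary observation that a descending chain of normal subgroups with successive index $p$ is generated by one representative chosen from each step, so I do not expect any serious obstacle beyond this index count and the separate treatment of the degenerate case $t=n-1$.
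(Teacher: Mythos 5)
Your proposal is correct and follows exactly the paper's own argument: iterate commutation with the uniform element $s\in K$ to produce, inside $K$, an element in each layer $G_{t+j}\smallsetminus G_{t+j+1}$ down to $Z(G)$, and conclude $G_t\le K$ since one representative per step of a chain with successive indices $p$ generates the whole subgroup. The paper states this in two lines and leaves the index bookkeeping, the generation step, and the case $t=n-1$ implicit, all of which you verify explicitly and correctly.
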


\begin{proof}
If $s\in K$ is a uniform element of $G$, then $[x,s]\in G_{t+1} \setminus G_{t+2}$,
$[x,s,s]\in G_{t+2} \setminus G_{t+3}$, and so on until we get an element in
$G_{n-1}\smallsetminus G_n$.
Since all these elements lie in $K$, we conclude that $G_t\le K$.
\end{proof}

\begin{pro}
\label{gen}
Let $G$ be a $p$-group of maximal class of order $p^n\geq p^4$, and let $K$ be a subgroup of $G$ which contains a uniform element $s$.
If the order of $K$ is $p^{n-t}$, then $K=\langle s, G_{t+1}\rangle$.
\end{pro}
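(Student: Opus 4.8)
The plan is to pin down exactly which terms of the lower central series lie inside $K$, and to do this by a counting argument fuelled by Lemma \ref{ll}. Throughout I use $|G_i|=p^{\,n-i}$, so in particular $|G_{t+1}|=p^{\,n-t-1}$, and I recall that $s\notin G_1$ since $s$ is uniform. I may assume $t\ge 1$, because for $t=0$ we have $|K|=p^n$, hence $K=G=\langle s,G_1\rangle$ and there is nothing to prove.

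The first step is to locate the least index $j$ for which $G_j\le K$; call it $j_0$. It exists because $G_n=1\le K$, and $j_0\ge 1$ since $s\in K\smallsetminus G_1$ forces $K\ne G$ when $t\ge 1$. As $G_j\le K$ for all $j\ge j_0$ while $G_j\not\le K$ for $j<j_0$, the indices $j$ with $G_j\le K$ are exactly $\{j_0,\dots,n\}$. The key observation is that Lemma \ref{ll} controls the chain $K=K\cap G_0\ge K\cap G_1\ge\cdots\ge K\cap G_n=1$. For $1\le i<j_0$, no element of $K$ can lie in $G_i\smallsetminus G_{i+1}$, for otherwise Lemma \ref{ll} would give $G_i\le K$, contradicting $i<j_0$; hence $K\cap G_i=K\cap G_{i+1}$ and that step has index $1$. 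For $i\ge j_0$ we have $K\cap G_i=G_i$, so the step has index $|G_i:G_{i+1}|=p$. Finally the first step has index $|K:K\cap G_1|=p$, since $s\in K\smallsetminus G_1$ yields $KG_1=G$.

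Counting the index-$p$ steps of this chain then gives $|K|=p^{\,1+(n-j_0)}$: one at the first step, none in the range $1\le i<j_0$, and $n-j_0$ of them for $j_0\le i\le n-1$. Comparing with $|K|=p^{\,n-t}$ forces $j_0=t+1$, so $G_{t+1}\le K$. To finish, I note that $G_{t+1}\le K$ with $|K|=p^{\,n-t}$ gives $|K:G_{t+1}|=p$, while $s\in K\smallsetminus G_{t+1}$ because $G_{t+1}\le G_1$ and $s\notin G_1$. Therefore $K/G_{t+1}=\langle sG_{t+1}\rangle=\langle s,G_{t+1}\rangle/G_{t+1}$, whence $K=\langle s,G_{t+1}\rangle$. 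I expect the only delicate point to be the bookkeeping in the counting step — in particular treating the first step $i=0$ separately from the lower-central-series steps, and disposing of the degenerate case $t=0$ — rather than any substantial difficulty.
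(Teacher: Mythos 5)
Your proof is correct and follows essentially the same route as the paper: both arguments use Lemma \ref{ll} together with an order count to pin down $K\cap G_1=G_{t+1}$ and then adjoin the uniform element $s$. The only cosmetic difference is that the paper shortcuts your step-by-step index count along the chain $K\cap G_i$ by applying Dedekind's law to get $K=\langle s\rangle (K\cap G_1)$, hence $|K\cap G_1|=p^{n-t-1}$, directly.
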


\begin{proof}
Since $G=\langle s\rangle G_1$ and $s\in K$, we have $K=\langle s\rangle(K\cap G_1)$, by Dedekind's law.
Hence $K\cap G_1$ is of order $p^{n-t-1}$.
The result is obvious for $t=n-1$, so we assume that $t\le n-2$.
Then $K\cap G_1\ne 1$, and so we can consider
\[
j = \min \{ i \mid 1\le i\le n-1,\ K\cap (G_i\smallsetminus G_{i+1})\ne \varnothing \}.
\]
Now it is clear from Lemma~\ref{ll} that $K\cap G_1=G_j$.
Since $|G_j|=p^{n-j}$, we conclude that $j=t+1$.
Thus $K=\langle s,G_{t+1} \rangle$, as desired.
\end{proof}

\begin{pro}
\label{max}
Let $G$ be a $p$-group of maximal class of order at least $p^4$, and let $H$ be a maximal subgroup of $G$ different from $G_1$.
Then, there is no simple virtual endomorphism from $H$ to $G$.
\end{pro}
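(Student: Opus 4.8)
The plan is to assume, for contradiction, that a simple virtual endomorphism $\phi\colon H\to G$ exists, and to extract from the lower central structure of $G$ a nontrivial normal subgroup of $G$ inside $H$ that is $\phi$-invariant. First I would record the relevant normal subgroups. Since $H$ is maximal in the $p$-group $G$ it is normal, and the normal subgroups of $G$ contained in $H$ are exactly $H$ itself together with the lower central terms $G_i$ for $2\le i\le n$ (recall that the normal subgroups of $G$ are the $G_i$ and the $p+1$ maximal subgroups). Hence $\phi$ is simple precisely when $\phi(G_i)\not\le G_i$ for all $2\le i\le n-1$ and $\phi(H)\not\le H$; in particular $K:=\phi(H)$ is not contained in $H$. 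I would then split the argument according to whether $H$ contains a uniform element, which happens for every maximal subgroup other than $G_1$ and the exceptional two-step centralizer $C_G(G_{n-2})$ (the latter differing from $G_1$ only when $\ell(G)=0$).

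In the main case, where $H$ contains a uniform element $s$, Proposition~\ref{gen} gives $H=\langle s,G_2\rangle$, so $H$ is itself of maximal class of order $p^{n-1}$; thus $Z(H)=Z(G)=G_{n-1}$ and $\gamma_{n-2}(H)=Z(G)$. The engine is the identity $\gamma_i(K)=\phi(\gamma_i(H))\le \gamma_i(G)=G_i$. Applied with $i=n-2$ it gives $\gamma_{n-2}(K)=\phi(Z(G))$, which is nontrivial by simplicity, so $K$ has class exactly $n-2$ and hence order at least $p^{n-1}$. On the other hand $\phi(Z(H))\le Z(K)$, and since $\phi(Z(G))\not\le Z(G)$ we get $Z(K)\not\le Z(G)$; consequently $K$ can contain no uniform element, for otherwise Proposition~\ref{gen} would present $K$ as a group of maximal class whose centre is forced to equal $Z(G)$. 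Since the non-uniform elements of $G$ fill exactly $G_1\cup C_G(G_{n-2})$ and a group is never the union of two proper subgroups, $K$ lies in $G_1$ or in $C_G(G_{n-2})$; the order bound then forces $K$ to be one of these maximal subgroups and to be of maximal class. This is the contradiction: $C_G(G_{n-2})$ is not of maximal class, as it has $G_{n-2}$, of order $p^2$, in its centre, while if $K=G_1$ then $Z(G_1)=Z(G)$, against $Z(K)\not\le Z(G)$.

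It remains to treat the exceptional subgroup $H=C_G(G_{n-2})$, which carries no uniform element and for which $\ell(G)=0$ and $n\ge 5$. Here $G_{n-2}\le Z(H)$, and since $\phi(G_{n-2})\not\le G_{n-2}$ by simplicity one still obtains $Z(K)\supseteq\phi(G_{n-2})\not\le Z(G)$, whence, exactly as above, $K$ has no uniform element; together with $K\not\le H$ this pins $K$ inside $G_1$. A direct computation shows that $Z(H)=G_{n-2}$ and that $H$ has class $n-3$. The aim is then to force $K=G_1$, after which $Z(G_1)=\phi(Z(H))=\phi(G_{n-2})$ would be a normal subgroup of $G$ of order $p^2$, necessarily equal to $G_{n-2}$, whereas $Z(G_1)\supseteq\phi(G_{n-2})\not\le G_{n-2}$ — a contradiction.

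I expect the main obstacle to be exactly this last case. In the generic case the clean count $\gamma_{n-2}(K)\neq 1$ immediately makes $K$ large enough to exhaust a maximal subgroup; in the exceptional case the class of $H$ drops to $n-3$, so this count only gives $|K|\ge p^{n-2}$, and one must instead show that $\phi$ is injective (equivalently that $K$ fills out $G_1$) by controlling $\ker\phi$ through the finer commutator structure of maximal-class groups of degree of commutativity $0$.
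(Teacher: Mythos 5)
Your main case (where $H$ contains a uniform element) is correct, though much longer than the paper's argument, but the exceptional case $H=C_G(G_{n-2})$ with $\ell(G)=0$ contains the genuine gap that you yourself flag. There your class computation only yields $\gamma_{n-3}(K)=\phi(G_{n-2})\neq 1$, hence $|K|\ge p^{n-2}$ with $K\le G_1$, while your intended contradiction (that $Z(G_1)=\phi(Z(H))=\phi(G_{n-2})$ would be a normal subgroup of $G$ of order $p^2$) requires $K=G_1$, i.e.\ injectivity of $\phi$. Nothing in your argument rules out $|\ker\phi|=p$: since $\ker\phi\trianglelefteq H$ it meets $Z(H)=G_{n-2}$ nontrivially, so it would be one of the $p+1$ subgroups of order $p$ of $G_{n-2}$; simplicity excludes only $\ker\phi=G_{n-1}$ (an order-$p$ normal subgroup of $G$ must lie in $Z(G)$), and the remaining candidates are normal in $H$ but not in $G$, so simplicity gives no purchase on them. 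In that scenario $|\phi(G_{n-2})|=p$ and everything you derived is consistent with $K$ of class $n-3$ and order $p^{n-2}<|G_1|$, so the case genuinely remains open in your framework. (A smaller slip: your ``exactly as above'' for excluding uniform elements of $K$ used $|K|\ge p^{n-1}$; with only $|K|\ge p^{n-2}$ you must also treat $t=2$ in Proposition~\ref{gen}, which does go through since $Z(G)\le G_3\le K$ still forces $Z(K)=Z(G)$.)

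The missing idea — which is the paper's entire proof — is the identity $H'=G_3$ for every maximal subgroup $H\neq G_1$, including $C_G(G_{n-2})$: since $G_2=\Phi(G)$ and $H/G_2$ is cyclic, $H'=[H,G_2]$ is normal in $G$ and properly contained in $G_2$, hence equals $G_i$ for some $i\ge 3$; and $[H,G_2]\le G_4$ would force $H\le C_G(G_2/G_4)=G_1$, so $H'=G_3\neq 1$. Since every maximal subgroup $M$ likewise satisfies $M'=[M,G_2]\le G_3$, choosing a maximal $M$ containing $\phi(H)$ gives $\phi(H')=\phi(H)'\le M'\le G_3=H'$, so the nontrivial normal subgroup $G_3$ is $\phi$-invariant, contradicting simplicity — with no case split on uniform elements, no centre comparisons, and no reference to $\ell(G)$. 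Note that this observation also closes your gap on the spot: in your exceptional case, $K\le G_1$ and the fact that $G_1/G_2$ is cyclic give $\phi(G_3)=\phi(H')=K'\le G_1'=[G_1,G_2]\le G_3$, which is already the desired contradiction without any injectivity claim.
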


\begin{proof}
First of all, we observe that $H'=[H,G_2]$, since $H/G_2$ is cyclic.
The commutator $[H,G_2]$ is a normal subgroup of $G$ which is strictly contained in $G_2$, so it must coincide with $G_i$ for some
$i\ge 3$.
Now $[H,G_2]$ is not contained in $G_4$, since otherwise $H=G_1$.
Hence $[H,G_2]=G_3$.

By way of contradiction, let us assume that $\phi:H\rightarrow G$ is a simple virtual endomorphism.
Then $\phi(H)$ is a proper subgroup of $G$, and we can consider a maximal subgroup
$M$ of $G$ containing $\phi(H)$.
Now we have
$$
\phi(H')=\phi(H)'\le M' = [M,G_2]\leq G_3=H',
$$
which is impossible, since $\phi$ is simple and $H'\ne 1$.
\end{proof}

At this moment, we can prove the second part of Theorem B, namely the bound for the order of a self-similar $p$-group of maximal class.
This bound is sharp, since the wreath product $C_p\wr C_p$ is a self-similar $p$-group of maximal class of order $p^{\hspace{0.5pt}p+1}$.

\begin{cor}
Let $G$ be a $p$-group of maximal class.
If $G$ is self-similar then $|G|\le p^{\hspace{0.5pt}p+1}$.
\end{cor}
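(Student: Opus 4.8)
The plan is to turn self-similarity into a simple virtual endomorphism and to locate its domain with Proposition \ref{max}. By Section 2, a self-similar finite $p$-group admits a simple virtual endomorphism $\phi\colon H\to G$ with $H$ maximal in $G$; as $G$ has maximal class of order $p^n\ge p^4$ (smaller orders being settled by Theorem \ref{abelian maximal}), Proposition \ref{max} forces $H=G_1$. First I would note that simplicity rules out $\phi(G_1)\subseteq G_1$: otherwise $G_1$ itself would be a nontrivial $\phi$-invariant $G$-normal subgroup of $H=G_1$. Hence $K:=\phi(G_1)$ is not contained in $G_1$. Since a $p$-group is never a union of two proper subgroups, either $K$ already meets $G\setminus(G_1\cup C_G(G_{n-2}))$, or $K\subseteq C_G(G_{n-2})$; the second possibility only arises when $\ell(G)=0$ (so that $C_G(G_{n-2})\ne G_1$) and would be excluded by a separate short argument. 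Thus $K$ contains a uniform element $s$, and Proposition \ref{gen} gives $K=\langle s,G_{t+1}\rangle$, where $|K|=p^{\,n-t}$.

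Next I would exploit that a uniform element has order at most $p^2$ and $p$-th power in $Z(G)$. For every $y\in G_1$ with $\phi(y)\notin G_1$ the image $\phi(y)$ is uniform, whence $y^{p^2}\in\ker\phi$ and $y^p\in\phi^{-1}(Z(G))$. The $y$ in question are precisely those outside the maximal subgroup $\phi^{-1}(G_1)$ of $G_1$, so one can choose a minimal generating set of $G_1$ among them. Using regularity of $p$-groups of class less than $p$, together with the shift property from Lemma \ref{ll} and the fact that $\ker\phi$ contains no nontrivial $G$-normal subgroup, I would propagate these power relations along the lower central series so as to trap a characteristic---hence $G$-normal---subgroup of $G_1$ inside $\ker\phi$. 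By simplicity that subgroup is trivial, which pins down the exponent, and more importantly the uniserial length, of $G_1$.

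It remains to read off the numerical bound. Fixing a uniform $s$, the map $x\mapsto[x,s]$ carries $G_i\setminus G_{i+1}$ into $G_{i+1}\setminus G_{i+2}$, so $n-1$ successive commutators with $s$ traverse the whole lower central series; feeding the power relation of the previous paragraph into this shift caps the length at $p$, giving $n-1\le p$ and hence $|G|\le p^{\,p+1}$. I expect the decisive difficulty to be exactly this propagation step at the regularity boundary $n=p+1$: the naive consequence $\exp(G_1)\le p^2$ is by itself far too weak, since there exist $p$-groups of maximal class of exponent $p^2$ and arbitrarily large order. The argument must therefore track precisely how far the $p$-th power map descends the lower central series, controlled by $s^p\in Z(G)$---which is the very mechanism singling out $C_p\wr C_p$, of order $p^{\,p+1}$, as the extremal example.
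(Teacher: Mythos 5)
Your setup is sound and matches the paper's opening move: self-similarity yields a simple virtual endomorphism $\phi\colon H\to G$ with $H$ maximal, and Proposition \ref{max} forces $H=G_1$. But beyond that point your text is a plan rather than a proof, and the step you yourself flag as ``the decisive difficulty'' --- propagating the relations $y^{p^2}\in\ker\phi$ and $y^p\in\phi^{-1}(Z(G))$ down the lower central series to conclude $n-1\le p$ --- is precisely what is missing. No concrete nontrivial $G$-normal, $\phi$-invariant subgroup of $G_1$ is ever exhibited: the elements $y$ with $\phi(y)$ uniform only generate $G_1$ together with the maximal subgroup $\phi^{-1}(G_1)$, on which you have no power relation at all; their $p^2$-th powers do not visibly generate anything characteristic in $G$ (regularity, which you invoke, is only available below class $p$ --- the very regime you are trying to establish, so the argument is circular at the boundary $n=p+1$); and, as you correctly observe, exponent bounds alone cannot work, since $p$-groups of maximal class of exponent $p^2$ exist in arbitrarily large orders. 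So the sketch cannot be closed along these lines without a genuinely new input.

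That input is exactly what the paper's proof supplies, and its mechanism is both different from and much shorter than your plan: it needs no analysis of $\phi(G_1)$, no uniform elements in the image, and no appeal to Proposition \ref{gen}. If $|G|=p^n>p^{\hspace{0.5pt}p+1}$, then Theorem 4.9 of \cite{fer} gives $G_1^p=G_p$ --- this single structural fact is what encodes the threshold $p^{\hspace{0.5pt}p+1}$, and it is what your ``propagation step'' is implicitly trying to re-derive. Since $s^p\in Z(G)\le G_p$ for every $s\in G\smallsetminus G_1$, one gets $G^p=G_1^p=G_p\neq 1$, because $p\le n-2$. Now $G^p$ is a nontrivial subgroup of $G_1=H$, normal in $G$, and $\phi$-invariant \emph{for free}: $\phi(G_1^p)=\phi(G_1)^p\le G^p$, simply because homomorphic images of $p$-th powers are $p$-th powers. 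Simplicity then forces $G^p=1$, a contradiction. Note in particular that the invariant subgroup here does not lie in $\ker\phi$; your strategy of trapping a characteristic subgroup inside the kernel is an unnecessarily restrictive reading of simplicity, and abandoning it in favour of power subgroups, which are automatically $\phi$-invariant, is the key simplification you missed.
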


\begin{proof}
Assume that $|G|>p^{\hspace{0.5pt}p+1}$.
By Theorem 4.9 of \cite{fer}, we have $G_1^p=G_p$.
Since $s^p\in Z(G)$ for every $s\in G\smallsetminus G_1$, it follows that
$G^p=G_1^p\ne 1$.
Now let $\phi:H\rightarrow G$ be a simple $\frac{1}{p}$-endomorphism of $G$, and observe that $H=G_1$ by Proposition \ref{max}.
Then $\phi(G^p)=\phi(G_1^p)\subseteq G^p$, which is a contradiction, since $\phi$ is simple.
\end{proof}

The following result completes the proof of Theorem B.

\begin{thm}
\label{main}
Let $G$ be a $p$-group of maximal class of order at least $p^4$.
Then $G$ is self-similar if and only if $G_1$ is elementary abelian, and $G$ is a split extension over $G_1$.
\end{thm}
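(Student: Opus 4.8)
The plan is to deduce the theorem from Theorem~\ref{abelian maximal}, once we have shown that self-similarity forces $G_1$ to be abelian. The `if' direction is immediate: if $G_1$ is elementary abelian and $G$ splits over it, then $G$ possesses an elementary abelian maximal subgroup over which it splits, so $G$ is self-similar by the `if' part of Theorem~\ref{abelian maximal}. Thus the whole content lies in the `only if' direction, and I claim it suffices to prove that $G_1$ is abelian. Indeed, for $n\ge 4$ every maximal subgroup $M\ne G_1$ is non-abelian, since $M'=[M,G_2]=G_3\ne 1$ exactly as in the proof of Proposition~\ref{max}; hence $G_1$ is the \emph{unique} abelian maximal subgroup of $G$. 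Once $G_1$ is known to be abelian, $G$ is a self-similar $p$-group with an abelian maximal subgroup, so Theorem~\ref{abelian maximal} provides an elementary abelian maximal subgroup $M$ over which $G$ splits; being abelian, $M$ must coincide with $G_1$. In this way both the elementary abelianness of $G_1$ and the splitting come out simultaneously.

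So assume $G$ is self-similar and fix a simple virtual endomorphism $\phi\colon H\to G$ with $H$ maximal. By Proposition~\ref{max} the only maximal subgroup admitting such a $\phi$ is $G_1$, so $H=G_1$ and we have a simple virtual $\tfrac1p$-endomorphism $\phi\colon G_1\to G$. I will show that $G_1$ is abelian, arguing by contradiction. Note first that $G_1$ is characteristic in $G$, so $G_1'$ is a normal subgroup of $G$; a short commutator computation, using $[G_1,G_2]\le G_4$ (the very definition of $G_1$) together with the degree of commutativity to bound $[G_2,G_2]\le G_4$, gives $G_1'\le G_4$. In particular, if $n=4$ then $G_4=1$ and $G_1$ is automatically abelian; so from now on $n\ge 5$ and we suppose $1\ne G_1'=G_j$ for some $4\le j\le n-1$.

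The engine of the argument is simplicity: any nontrivial subgroup of $G_1$ that is normal in $G$ and $\phi$-invariant yields a contradiction. I first dispose of the case $\phi(G_1)\le G_1$: then $\phi(G_1')=\phi(G_1)'\le G_1'$, so the nontrivial $G$-normal subgroup $G_1'$ is $\phi$-invariant, which is impossible. Hence $\phi(G_1)\not\le G_1$; since $\phi$ cannot be surjective on order grounds, $\phi(G_1)$ lies in some maximal subgroup $M\ne G_1$, and therefore $\phi(G_1')=\phi(G_1)'\le M'=G_3$. The main obstacle is the remaining possibility that $\phi(G_1')$ reaches strictly above $G_j=G_1'$ inside $G_3$, so that $G_1'$ itself fails to be $\phi$-invariant. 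To close this gap I plan to track $\phi$ along the entire lower central series: using Lemma~\ref{ll} and Proposition~\ref{gen} to control which terms $G_i$ are generated inside $G_1$ and inside $M$, together with the sharp bound $n\le p+1$ from the preceding corollary and the power relation $s^p\in Z(G)$ for uniform elements, I aim to locate a term $G_i$ with $i\ge 3$ that is simultaneously $G$-normal and $\phi$-invariant, contradicting simplicity. The delicate point, and what I expect to be the hardest step, is precisely this bookkeeping showing that $\phi$ must carry some term of the series into itself; this is where the special arithmetic of maximal class groups of order at most $p^{\hspace{0.5pt}p+1}$ is genuinely used. Once the contradiction is reached, $G_1$ is abelian, and the reductions of the first paragraph complete the proof.
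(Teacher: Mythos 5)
Your framing and reductions are sound: the `if' direction via Theorem~\ref{abelian maximal}, the observation that $G_1$ is the unique abelian maximal subgroup (since $M'=[M,G_2]=G_3\ne 1$ for every maximal $M\ne G_1$ when $n\ge 4$), the identification $H=G_1$ via Proposition~\ref{max}, the exclusion of $\phi(G_1)\le G_1$ by simplicity, and the containment $\phi(G_1')\le M'=G_3$ are all correct, and they match the paper's setup. But the heart of the theorem --- deriving a contradiction from $G_1'\ne 1$ --- is never proved in your proposal: you explicitly defer it (``I plan to track $\phi$ along the entire lower central series\dots I aim to locate a term $G_i$\dots this bookkeeping'' is announced as the hardest step but not carried out). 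As it stands, this is a proof outline with the central step missing, not a proof.

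Moreover, the sketched plan points in a direction that would run into trouble. The paper's argument splits on the degree of commutativity. When $\ell(G)\ge 1$, every $s\in\phi(G_1)\smallsetminus G_1$ is a \emph{uniform} element (because $C_G(G_{n-2})=G_1$), so Proposition~\ref{gen} gives $\phi(G_1)=\langle s,G_{t+1}\rangle$; if $|\phi(G_1)|\ge p^3$ this group is of maximal class, whence $Z(\phi(G_1))=Z(G)\le Z(G_1)$ and $\phi(Z(G_1))\le Z(\phi(G_1))\le Z(G_1)$ --- the $G$-normal $\phi$-invariant subgroup contradicting simplicity is $Z(G_1)$, not some lower-central term located by arithmetic bookkeeping. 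This forces $|\phi(G_1)|\le p^2$, hence $\phi(G_1')=\phi(G_1)'=1$ and $G_1'=1$ by simplicity. When $\ell(G)=0$, your plan breaks down entirely: elements of $\phi(G_1)\smallsetminus G_1$ may lie in $C_G(G_{n-2})\ne G_1$, where they are \emph{not} uniform, so Proposition~\ref{gen} does not apply; the paper instead shows $Z(G_1)=G_{n-1}$, rules out non-injective $\phi$ via simplicity, and rules out injective $\phi$ because $\phi(G_1)$ would have to equal $C_G(G_{n-2})$, whose centre contains $G_{n-2}$ of order $p^2$ while $|Z(G_1)|=p$. Finally, the ingredients you flag as essential --- the bound $n\le p+1$ and $s^p\in Z(G)$ --- play no role in the paper's proof of this theorem (the order bound is proved separately, and its proof uses Proposition~\ref{max}, not the other way around), so the ``special arithmetic'' you expect to invoke is a red herring.
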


\begin{proof}
By \v{S}uni\'c's result, we only need to prove the `only if' part of the statement of the theorem.
So assume that $G$ is self-similar, and let $\phi:H\rightarrow G$ be a simple virtual endomorphism, where $H$ is a maximal subgroup of $G$.
By Proposition \ref{max}, $H$ must be $G_1$.

Let $G$ be of order $p^n$, and suppose first that $\ell(G)\ge 1$.
Since $\phi$ is simple, it follows that $\phi(G_1)\nsubseteq G_1$.
If $s\in \phi(G_1)\smallsetminus G_1$, then $s$ is a uniform element of $G$, since
$C_G(G_{n-2})=G_1$ in this case.
Consequently, by Proposition~\ref{gen}, we have $\phi(G_1)=\langle s, G_{t+1}\rangle$, for some integer $t$.
If $|\phi(G_1)|\geq p^3$, then the centre of $\phi(G_1)$ is of order $p$, since this group is also of maximal class.
As a consequence, we get $Z(\phi(G_1))=Z(G)$.
On the other hand, since $Z(G_1)$ is a normal subgroup of $G$, it follows that $Z(G)\leq Z(G_1)$.
Consequently,
$$
\phi(Z(G_1))\leq Z(\phi(G_1))=Z(G)\leq Z(G_1),
$$
which is a contradiction, since $\phi$ is simple.
Thus $|\phi(G_1)|=p$ or $p^2$, and then $\phi(G_1')=\phi(G_1)'=1$.
Since $\phi$ is simple, this implies that $G_1$ is abelian.
By \v{S}uni\'c's result, we know that $G_1$ is elementary abelian and $G$ splits over $G_1$, as desired.

Assume now that $\ell(G)=0$.
Since $Z(G_1)$ is normal in $G$, we must have $Z(G_1)=G_i$ for some $i$.
But we have $[G_1,G_{n-2}]\ne 1$, since $G_1\ne C_G(G_{n-2})$ in this case.
Thus $Z(G_1)=G_{n-1}$ is of order $p$.
If $\phi$ is not injective, then $Z(G_1)\leq \ker \phi$.
Therefore, $\phi(Z(G_1))=1$, which is a contradiction, since $\phi$ is simple.
On the other hand, if $\phi$ is injective, then $\phi(G_1)$ is a maximal subgroup of $G$. Since $G_1'=[G_1,G_2]\le G_4$, we have $|G_1:G_1'|\ge p^3$, and $G_1$ is not of maximal class.
So neither $\phi(G_1)$ is of maximal class.
But we know that all maximal subgroups of $G$ other than $G_1$ and $C_G(G_{n-2})$ are of maximal class, and also $\phi(G_1)\ne G_1$, since $\phi$ is simple.
Thus we necessarily have $\phi(G_1)=C_G(G_{n-2})$.
This implies that $G_1$ and $C_G(G_{n-2})$ are isomorphic, but notice that $|Z(G_1)|=p$, while $Z(C_G(G_{n-2}))$ contains $G_{n-2}$, which is of order $p^2$.
This contradiction shows that we cannot have $\ell(G)=0$, which completes the proof of the theorem.
\end{proof}

If $G$ is a $p$-group of maximal class with $G_1$ elementary abelian, one cannot guarantee that $G$ is a self-similar $p$-group.
Indeed, it may perfectly happen that all elements in $G\smallsetminus G_1$ (i.e.\ all uniform elements) are of order $p^2$.
However, we have the following result.

\begin{cor}
Let $G$ be a $p$-group of maximal class of order at most $p^{\hspace{0.5pt}p+1}$ in which $G_1$ is abelian.
Then $G/Z(G)$ is a self-similar $p$-group.
\end{cor}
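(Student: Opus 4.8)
The plan is to reduce everything to \v{S}uni\'c's criterion applied to the quotient $\overline{G}=G/Z(G)$. Since $Z(G)=G_{n-1}$ has order $p$ and lies inside $G_1$, the image $\overline{G}_1=G_1/Z(G)$ is an abelian maximal subgroup of $\overline{G}$ (here $|G|=p^n$, and the hypothesis is precisely that $n\le p+1$). By \v{S}uni\'c's result it then suffices to show that $\overline{G}_1$ is elementary abelian and that $\overline{G}$ splits over it, for this exhibits $\overline{G}$ as a split extension of an elementary abelian group by a cyclic group of order $p$; note that this gives the conclusion for all relevant orders, with no separate treatment of small cases.

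For the splitting I would fix a uniform element $s$ of $G$. Since uniform elements satisfy $s^p\in Z(G)$ and lie outside $G_1$, the image $\overline{s}=sZ(G)$ has order $p$ in $\overline{G}$ and does not belong to $\overline{G}_1$; as $|\overline{G}:\overline{G}_1|=p$, the cyclic group $\langle\overline{s}\rangle$ is then a complement to $\overline{G}_1$, so $\overline{G}$ splits over $\overline{G}_1$.

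The heart of the matter is showing that $\overline{G}_1$ is elementary abelian, equivalently that $G_1^p\le Z(G)$. Here I would exploit the conjugation action of $s$: writing $G_1$ additively, let $\sigma$ be conjugation by $s$ and put $\tau=\sigma-1$. Two standard facts about maximal class translate into operator statements. First, $G_{i+1}=[G_i,s]=\tau G_i$ (using that $G_1$ is abelian and $\sigma$-invariant), so $G_i=\tau^{i-1}G_1$ and $\tau$ is nilpotent with $\tau^{n-1}=0$. Second, $\ker\tau=C_{G_1}(s)=Z(G)$, since $s$ uniform forces $|C_G(s)|=p^2$ while $s\notin G_1$. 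Finally $s^p\in Z(G)$ gives $\sigma^p=1$ on $G_1$, that is $(1+\tau)^p=1$, which expands to $\sum_{k=1}^{p}\binom{p}{k}\tau^k=0$. This is where the bound $n\le p+1$ enters decisively: it forces $\tau^p=\tau^{n-1}\tau^{\,p-n+1}=0$, so the only surviving terms have $1\le k\le p-1$, all with coefficients $\binom{p}{k}$ divisible by $p$. Factoring out $p$ and the unit $u=1+\tfrac1p\binom{p}{2}\tau+\cdots+\tfrac1p\binom{p}{p-1}\tau^{p-2}$ (a unit since it has constant term $1$ and $\tau$ is nilpotent) leaves $p\tau=0$ on $G_1$. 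Hence $\tau(px)=0$ for every $x\in G_1$, that is $G_1^p=pG_1\subseteq\ker\tau=Z(G)$, as required.

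The main obstacle is this last step, and especially isolating why the order bound is indispensable: if $n>p+1$ then $\tau^p\ne0$, so the term $\binom{p}{p}\tau^p=\tau^p$ survives with coefficient coprime to $p$, and the relation yields only $\tau^p\equiv0\pmod p$ rather than $p\tau=0$; correspondingly $G_1^p\le Z(G)$ can genuinely fail. Thus the computation with $(1+\tau)^p$ must be done carefully, tracking the nilpotency degree of $\tau$ against the exponent $p$. Everything else, namely the splitting and the concluding appeal to \v{S}uni\'c's result, is routine.
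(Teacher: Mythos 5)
Your proof is correct, but it takes a genuinely different route from the paper's. The paper disposes of the whole corollary in one line: it cites Theorem 4.7 of \cite{fer}, which asserts that a $p$-group of maximal class of order at most $p^{\hspace{0.5pt}p+1}$ has $G/Z(G)$ of exponent $p$ (a fact usually proved via regularity, since $G/Z(G)$ has class $n-2\le p-1$ and is generated by images of uniform elements with $s^p\in Z(G)$), and then invokes Theorem \ref{main}, i.e.\ \v{S}uni\'c's ``if'' direction, for $\overline{G}=G/Z(G)$. You instead reprove from scratch exactly the fragment needed: using that $G_1$ is abelian, you set up the operator calculus with $\tau=\sigma-1$, get $\tau G_i=G_{i+1}$ (the kernel at each stage has order $p$ because $C_{G_1}(s)=Z(G)$ for $s$ uniform), hence $\tau^{n-1}=0$, and extract $p\tau=0$ from $(1+\tau)^p=1$ once $n\le p+1$ kills $\tau^p$; the unit factoring is legitimate since $u$ is a polynomial in the nilpotent $\tau$ with constant term $1$, hence an automorphism of $G_1$ commuting with $\tau$, and so $G_1^p\le\ker\tau=Z(G)$. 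Your version buys self-containedness, uses the hypothesis that $G_1$ is abelian (which the cited general theorem does not need), and isolates exactly where $n\le p+1$ is indispensable, whereas the paper's proof is shorter by citation. A further small advantage: by appealing directly to \v{S}uni\'c's criterion, with the complement $\langle\overline{s}\rangle$ coming from $s^p\in Z(G)$ rather than from full exponent $p$, you uniformly cover the cases where $|\overline{G}|<p^4$, for which Theorem \ref{main} as stated does not literally apply --- a point the paper's ``immediate consequence'' glosses over. (Both arguments, like the statement itself, implicitly assume $|G|\ge p^4$ so that $G_1$ and uniform elements are defined; for $p=2$ this leaves nothing to prove.)
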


\begin{proof}
This is an immediate consequence of Theorem \ref{main}, since by
Theorem 4.7 of \cite{fer} the quotient group $G/Z(G)$ is of exponent $p$.
\end{proof}

\noindent
\textit{Acknowledgment\/}.
We thank Prof.\ Said Sidki for helpful comments.

\end{document}